\documentclass[11pt,amstex,amssymb]{amsart}
\usepackage{amsmath,amsthm,amsfonts,amssymb,amscd}
\usepackage[latin1]{inputenc}
\usepackage[all]{xy}
\usepackage[dvips]{graphicx}
\usepackage{amsmath}
\usepackage{amsthm}
\usepackage{amsfonts}
\usepackage{amssymb}%
\setlength{\textheight}{23cm}
\setlength{\textwidth}{16cm}
\setlength{\oddsidemargin}{0cm}
\setlength{\evensidemargin}{0cm}

\usepackage{amsmath}
\usepackage{amssymb}
\usepackage{amsthm}
 %new footnote counter

\def\fa{{\mathcal{F}}}
\def\O{{\mathcal{O}}}

\def\M{{\mathcal{M}}}
\def\D{{\mathcal{D}}}
\def\U{{\mathcal{U}}}
\def\B{{\mathcal{B}}}

\def\ve{{\varepsilon}}
\def\bq{{\mathbb{Q}}}

\def\rg{{\rangle}}

\newtheorem{theorem}{Theorem}

\theoremstyle{plain}
\newtheorem{Theorem}{Theorem}[section]
\newtheorem{Lemma}[Theorem]{Lemma}
\newtheorem{Corollary}[Theorem]{Corollary}
\newtheorem{Proposition}[Theorem]{Proposition}

\theoremstyle{definition}
\newtheorem{Remark}[Theorem]{Remark}
\newtheorem{Definition}[Theorem]{Definition}

\newtheorem{Claim}[Theorem]{Claim}

\newtheorem{Question}[Theorem]{Question}

\title[On the integrability of  Hill's equation]{
On the integrability  of Hill's equation of the motion of the moon}

\author{F.  Reis}
\author{B.  Scárdua}

\subjclass[2000]{Primary 37F75, 57R30; Secondary 32M25, 32S65.}

\address{}

%\date{}

\begin{document}

\begin{abstract}
We study under the standpoint of integrable complex analytic 1-forms
(complex analytic foliations), a class of second order ordinary
differential equations with periodic coefficients. More precisely,
we study Hill's equations of motion of the moon, which are related
to the dynamics of the system Sun-Earth-Moon. We associate to the
{\em complex Hill equation} an integrable complex analytic one-form
in dimension three. This defines a {\it Hill foliation}. The
existence of first integral for a Hill foliation is then studied.
The simple cases correspond to the existence of  rational or
Liouvillian first integrals. We then  prove the existence of a {\it
Bessel type} first integral in a more general case. We construct a
standard two dimensional model for the foliation which we call {\it
Hill fundamental form}. This plane foliation is then studied also
under the standpoint of reduction of singularities and existence of
first integral. For the more general case of the Hill equation, we
prove for the corresponding Hill foliation, the existence of a
Laurent-Fourier type formal first integral. Our approach suggests
that there may be a class of plane foliations admitting Bessel type
first integrals, in connection with the classification of (holonomy)
groups of germs of complex diffeomorphisms associate to a certain
class of second order ODEs.
 \end{abstract}

\maketitle

%%%%%%%%%%%%%%%%%%%%%% Paper-specific definitions %%%%%%%%%%%%%%%%%
\newcommand\virt{\rm{virt}}
\newcommand\SO{\rm{SO}}
\newcommand\G{\varGamma}
\newcommand\Om{\Omega}
\newcommand\Kbar{{K\kern-1.7ex\raise1.15ex\hbox to 1.4ex{\hrulefill}}}
\newcommand\codim{\rm{codim}}
\renewcommand\:{\colon}
\newcommand\s{\sigma}
\def\vol#1{{|{\bfS}^{#1}|}}

\def\fa{{\mathcal F}}
\def\H{{\mathcal H}}
\def\O{{\mathcal O}}
\def\P{{\mathcal P}}
\def\L{{\mathcal L}}
\def\C{{\mathcal C}}
\def\Z{{\mathcal Z}}

\def\M{{\mathcal M}}
\def\N{{\mathcal N}}
\def\R{{\mathcal R}}
\def\ea{{\mathcal e}}
\def\Oa{{\mathcal O}}
\def\ee{{\bfE}}

\def\A{{\mathcal A}}
\def\B{{\mathcal B}}
\def\H{{\mathcal H}}
\def\V{{\mathcal V}}
\def\U{{\mathcal U}}
\def\al{{\alpha}}
\def\be{{\beta}}
\def\ga{{\gamma}}
\def\Ga{{\Gamma}}
\def\om{{\omega}}
\def\Om{{\Omega}}
\def\La{{\Lambda}}
\def\ov{\overline}
\def\dd{{\bfD}}
\def\pp{{\bfP}}

\def\nn{{\mathbb N}}
\def\zz{{\mathbb Z}}
\def\bq{{\mathbb Q}}
\def\bp{{\mathbb P}}
\def\bd{{\mathbb D}}
\def\bh{{\mathbb H}}
\def\te{{\theta}}
\def\rr{{\mathbb R}}
\def\bb{{\mathbb B}}

\def\pp{{\mathbb P}}

\def\dd{{\mathbb D}}
\def\zz{{\mathbb Z}}
\def\qq{{\mathbb Q}}

\def\hh{{\mathbb H}}
\def\nn{{\mathbb N}}

\def\LL{{\mathbb L}}

\def\co{{\mathbb C}}
\def\qq{{\mathbb Q}}
\def\na{{\mathbb N}}
\def\esima{${}^{\text{\b a}}$}
\def\esimo{${}^{\text{\b o}}$}
\def\rg{\rangle}
\def\ro{{\rho}}
\def\lV{\left\Vert}
\def\rV{\right\Vert }
\def\lv{\left|}
\def\rv{\right| }
\def\Sa{{\mathcal S}}
\def\D{{\mathcal D  }}

\def\si{{\bf S}}
\def\ve{\varepsilon}
\def\vr{\varphi}
\def\lV{\left\Vert }
\def\rV{\right\Vert}
\def\lv{\left| }
\def\rv{\right|}
\def\Range{\rm{{R}}}
\def\vol{\rm{{Vol}}}
\def\ind{\rm{{i}}}

\def\Int{\rm{{Int}}}
\def\Dom{\rm{{Dom}}}
\def\supp{\rm{{supp}}}
\def\Aff{\rm{{Aff}}}
\def\Exp{\rm{{Exp}}}
\def\Hom{\rm{{Hom}}}
\def\codim{\rm{{codim}}}
\def\cotg{\rm{{cotg}}}
\def\dom{\rm{{dom}}}
\def\Sa{\mathcal{{S}}}

\def\VIP{\rm{{VIP}}}
\def\argmin{\rm{{argmin}}}
\def\Sol{\rm{{Sol}}}
\def\Ker{\rm{{Ker}}}
\def\Sat{\rm{{Sat}}}
\def\diag{\rm{{diag}}}
\def\rank{\rm{{rank}}}
\def\Sing{\rm{{Sing}}}
\def\sing{\rm{{sing}}}
\def\hot{\rm{{h.o.t.}}}

\def\Fol{\rm{{Fol}}}
\def\grad{\rm{{grad}}}
\def\id{\rm{{id}}}
\def\Id{\rm{{Id}}}
\def\sep{\rm{{Sep}}}
\def\Aut{\rm{{Aut}}}
\def\Sep{\rm{{Sep}}}
\def\Res{\rm{{Res}}}
\def\ord{\rm{{ord}}}
\def\h.o.t.{\rm{{h.o.t.}}}
\def\Hol{\rm{{Hol}}}
\def\Diff{\rm{{Diff}}}
\def\SL{\rm{{SL}}}

\tableofcontents

\section{Introduction}
 In the year of  1877, G.W. Hill published his celebrated work
(\cite{hill}) \textit{``On the part of the motion of the lunar
perigee which is a function of the mean motions of the sun and
moon"}.  In this masterpiece   Hill describes the movement of the
moon around the earth by considering it as a harmonic oscillator in
a periodic gravitational vector field and introduced the following
model (called {\it Hill's equation} \cite{arnold0,arnold})
\begin{eqnarray} \label{HEqIntro}
u'' + p(z)u = 0
\end{eqnarray}
where $p$ is a periodic function of the time $t$ and $u(t)$
describes the distance (position) of the moon with respect to the
earth.  Hill introduced and applied successfully for the first time
the theory of {\it infinite determinants}. From that moment on, a
great effort has been made towards the comprehension of  Hill's
equation (see for instance \cite{magnusWinkler}).

In this work we study the {\it complex Hill equation}. In short,
this means we shall study the equation $u''(z) + p(z)u(z) = 0$,
where $p(z)$ is a complex analytic periodic function defined in some
domain $A\subseteq \mathbb C$. Since this equation is related to the
problem of planetary movement and stability of the solar system, we
address the following question:
\begin{Question}
Is there any integrable structure  connected to  the Hill equation?
\end{Question}
The above question may seem quite general. Indeed, one of the gains
of this work is to investigate an appropriate notion of
integrability in this case. Usually, when dealing with ODEs the word
{\em integrability} means to find some  kind of potential function,
but this does not necessarily seem appropriate because the order of
the equation is two. In this work we bring some geometrical features
based in the theory of complex analytic (holomorphic) foliations. In
this foliation framework the notion of integrability has essentially
two interpretations. One is the existence of an integrable one-form
to which the ODE is tangent (cf.
Theorem~\ref{Theorem:integrableHill}). Another is the existence of a
first integral for a foliation that is tangent to the ODE (cf.
Theorems~\ref{Theorem:integralhillform} and
\ref{Theorem:LaurentFourier}). We shall explore these concepts and
try to shed some light into the comprehension of the Hill equation
in the complex framework.

\section{The complex Hill equation} \label{Sec EqHill}
In his work
 \cite{hill}, G. W. Hill considered
 equations related to the  {\it three-body problem}, namely
 earth, moon and sun, given by
\begin{eqnarray}
\frac{d^2u}{d z^2} - 2m\frac{dv}{d z} + \chi \frac{u}{r^3} &=& 3m^2u \label{EqHillOrigin1}\\
\frac{d^2v}{d z^2} - 2m\frac{du}{d z} + \chi \frac{v}{r^3} &=& 0
\label{EqHillOrigin2}
\end{eqnarray}
In the above equations, $u,v$ are given  rectangular coordinates of
the moon having the earth as center and, $r = \sqrt{u^2 + v^2}$. The
parameter  $m$ is given by
$$m = \frac{n'}{n-n'},$$ where  $n'$ is the mean motion of the sun and  $n$ is
the mean motion  of the moon.  Hill works with the estimative
 $m = 0,08084893679$. The parameter $\chi$ is given by
\begin{eqnarray*}
\chi = G\frac{M_e + M_m}{(n-n')^2},
\end{eqnarray*}
where  $M_e,M_m$ are the mass of the earth and of the moon
respectively, and $G$ is  Cavendish's gravitational constant.

G. W. Hill was able to associate to equations (\ref{EqHillOrigin1})
and  (\ref{EqHillOrigin2}) the single equation,
\begin{equation} \label{EqHillreal}
u''(s) + p(s)u(s) = 0
\end{equation}
where  $p$ is a periodic real valued function. The variable  $s$ is
related to the time $t$ by the formula  $s = (n - n')(t - t_0)$,
where  $t_0$ is the initial time. Most of the classical works in
Hill's equation are based on the following hypothesis:

\vglue.1in \textbf{Hypothesis 1: } {\em $p(s)$ is a real integrable
periodic function of period $\pi$.} \vglue.1in

The original work of Hill and the classical reference
\cite{magnusWinkler} of  Magnus and  Winkler consider Hypothesis  1
(see for instance \cite{harry}). Despite this restriction, the
solutions are allowed to have complex values.

In this work we shall start the study of the {\it complex Hill
equation}
\begin{eqnarray} \label{HEqComplex}
u''(z) + p(z)u(z) = 0
\end{eqnarray}
where $p(z)$ is a complex periodic function. We shall assume that
$p(z)$ is complex analytic defined in a strip $A\subseteq \mathbb C$
containing the real axis $\Im(z)=0$.

Depending on the viewpoint, the complex case and
 the real case may be  quite different, for some functions are
periodic in the complex framework, but not in the real setting. This
is the case of the exponential function $p(z)=e^z, z \in \mathbb C$.
Another important particular case is the {\it complex  Mathieu
equation} (\cite{arnold,tongues})

\begin{eqnarray} \label{HEqComplexMathieu} u''(z) + (a+  b\cos z) u(z) = 0
\end{eqnarray}
where $a,b \in \mathbb C$ are complex numbers. For this function the
main difference with the real case is the fact that the real
function is bounded but this is not the case of the complex
function. The complex Mathieu equation  will be treated in a
forthcoming work.

\section{ Hill forms and Hill foliations}
In this section we shall study the Hill equation (\ref{HEqComplex})
from the point of view of integrable one-forms.  We start with a
more general situation of a second order complex ODE
\begin{eqnarray} \label{eq2ordem}
u''(z) + b(z)u = 0
\end{eqnarray}
where  $b$ is a complex analytic function defined in an open set
$U\subset \mathbb C$.  As a first step we perform the classical
order reduction process where equation  (\ref{eq2ordem}) is
rewritten after the following ``change of coordinates": $x = u, \, y
= u', \, z= z$. We then obtain $x'= u' = y, \, y' = u'' = - b(z)u =
- b(z)x, \, z' = 1$. Therefore, a natural vector field $X\colon
\mathbb{C}^2 \times U \rightarrow \mathbb{C}^2\times U$ associated
to  equation (\ref{eq2ordem}) is   given by $X(x,y,z) = y
\frac{\partial}{\partial x} - b(z)x \frac{\partial}{\partial y} +
\frac{\partial}{\partial z}.$ We shall refer to this vector field as
the {\it Hill vector field} associated to the Hill equation
(\ref{eq2ordem}).

We shall say that a complex vector field $X$ in some open subset
$W\subset \mathbb C^3$ is {\it integrable in the weak sense} or {\it
w-integrable} for short,  if it is tangent to a codimension one
complex analytic foliation (possibly singular) $\fa$ of $W$. The
vector field is said to be {\it s-integrable} or {\it integrable in
the strong sense} if it is w-integrable and the foliation $\fa$ can
be chosen to have a first integral. For the moment we shall not
decide what kind of first integral we shall be working with
(polynomial, rational, meromorphic, Liouvillian, formal...).

\begin{theorem} \label{Theorem:integrableHill}
A Hill vector field is always w-integrable.
\end{theorem}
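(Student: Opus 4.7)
The strategy is to exhibit an explicit integrable holomorphic 1-form $\omega$ on $\mathbb{C}^2 \times U$ that annihilates the Hill vector field $X = y\,\partial_x - b(z)x\,\partial_y + \partial_z$. The candidate I propose is
$$\omega = x\,dy - y\,dx + \bigl(y^2 + b(z)x^2\bigr)\,dz.$$
With this 1-form in hand, the whole verification reduces to two short direct computations.

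First I would check tangency: $\omega(X) = x(-b(z)x) - y\cdot y + (y^2 + b(z)x^2) = 0$. Next I would compute
$$d\omega = 2\,dx\wedge dy + 2b(z)x\,dx\wedge dz + 2y\,dy\wedge dz,$$
and verify term by term that $\omega \wedge d\omega = 0$: the three nonzero contributions in $\omega\wedge d\omega$, proportional to $-2b(z)x^2$, $-2y^2$ and $2(y^2 + b(z)x^2)$ (each multiplying $dx\wedge dy\wedge dz$), cancel exactly. By Frobenius, $\omega$ then defines a codimension-one holomorphic (singular) foliation $\fa$ tangent to $X$, with singular set the $z$-axis $\{x=y=0\}$.

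The only nonroutine part is motivating the choice of $\omega$. On any simply connected domain $V \subseteq U$, pick two linearly independent local solutions $u_1(z), u_2(z)$ of $u'' + b(z)u = 0$ with (constant) Wronskian $W = u_1u_2' - u_1'u_2$ --- constant because the $u'$-coefficient in the equation vanishes. Set
$$F_1(x,y,z) = u_2'(z)\,x - u_2(z)\,y, \qquad F_2(x,y,z) = u_1(z)\,y - u_1'(z)\,x.$$
Using $u_i'' = -b u_i$ one checks $X(F_1) = X(F_2) = 0$, so these are holomorphic first integrals of $X$ on $\mathbb{C}^2\times V$, and a direct expansion yields the identity $F_2\,dF_1 - F_1\,dF_2 = -W\,\omega$. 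Thus $\omega$ is (up to a constant) the Riccati-type 1-form associated to the pencil $F_1 : F_2$, whose leaves are the level surfaces of $F_1/F_2$.

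The main obstacle, and the reason for packaging the conclusion as a 1-form rather than as first integrals, is global well-definedness when $U$ is not simply connected: the local solutions $u_1, u_2$ are generally multivalued, so $F_1, F_2$ need not descend to $\mathbb{C}^2\times U$. The saving grace is that the monodromy of $(u_1,u_2)$ is a representation $\pi_1(U) \to SL(2,\mathbb{C})$ (determinant one precisely because $W$ is constant), and the bilinear combination $F_2\,dF_1 - F_1\,dF_2$ is invariant under every $SL(2,\mathbb{C})$-change of basis. Hence $\omega$ is intrinsically defined on $\mathbb{C}^2\times U$, which yields the desired codimension-one foliation tangent to $X$ and establishes the claimed w-integrability.
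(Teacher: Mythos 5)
Your proof is correct, and it arrives at exactly the same integrable $1$-form $\omega=-y\,dx+x\,dy+(y^2+b(z)x^2)\,dz$ that the paper produces, but by a genuinely different and more economical route. The paper first establishes a general technical criterion (Lemma~\ref{eqInteg}) reducing $\omega\wedge d\omega=0$ for a form annihilating $X$ to a Riccati-type first-order PDE in the coefficient $A$ (after normalizing $B\equiv 1$), solves that PDE via the method of characteristics with the ansatz $A=-y/x$, and then clears the pole by multiplying by $x$. You instead exhibit $\omega$ outright and check $\omega(X)=0$ and $\omega\wedge d\omega=0$ by two short direct computations; both check out (with $A=-y$, $B=x$, $C=y^2+bx^2$ and $d\omega=2\,dx\wedge dy+2bx\,dx\wedge dz+2y\,dy\wedge dz$, the coefficient of $dx\wedge dy\wedge dz$ in $\omega\wedge d\omega$ is $-2y^2-2bx^2+2(y^2+bx^2)=0$). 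Your conceptual derivation is a genuine bonus rather than a necessity: the identity $F_2\,dF_1-F_1\,dF_2=-W\omega$ for $F_1=u_2'x-u_2y$, $F_2=u_1y-u_1'x$ is correct in all three coefficients, and the ${\rm SL}(2,\mathbb{C})$-invariance argument correctly explains why $\omega$ is single-valued even when the solutions $u_1,u_2$ are not; this anticipates the first integral $H=(xw_1'-yw_1)/(xw_2'-yw_2)$ that the paper only constructs much later, in Theorem~\ref{Teo: relation}. What the paper's approach buys is a reusable integrability criterion that could be tried with other choices of ansatz; what yours buys is brevity, a cleaner verification (avoiding the paper's slightly garbled substitution step in Claim~\ref{verifiq}, where stray $a(z)$ terms appear), and an intrinsic explanation of where $\omega$ comes from.
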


For the proof of Theorem~\ref{Theorem:integrableHill} we shall need
the following technical result:

\begin{Lemma} \label{eqInteg}
Let $X:\mathbb{C}^3 \rightarrow \mathbb{C}^3$ be a complex vector
field in  $\mathbb{C}^3$ given by $ X(x,y,z) =
f_1(x,y,z)\frac{\partial}{\partial x} +
f_2(x,y,z)\frac{\partial}{\partial y} +
f_3(x,y,z)\frac{\partial}{\partial z}$ where  $f_j : \mathbb{C}^3
\rightarrow \mathbb{C}$ is complex analytic $j=1,2,3. $ Let also
$\omega$ be a complex analytic 1-form given in $\mathbb C^3$  by $
\omega(x,y,z) = A(x,y,z)dx + B(x,y,z)dy + C(x,y,z)dz. $ and
satisfying $\omega(X)\equiv 0.$
 Then we have $\omega \wedge d\omega = 0$
provided that $ f_1f_3B\frac{\partial A}{\partial x} +
f_2f_3B\frac{\partial A}{\partial y} + f_3^2B\frac{\partial
A}{\partial z} = \left( f_3 \frac{\partial f_1}{\partial y} - f_1
\frac{\partial f_3}{\partial y} \right)A^2
+ \left[f_1f_3 \frac{\partial B}{\partial x} + f_2f_3\frac{\partial B}{\partial y} - f_3^2\frac{\partial B}{\partial z} + B\left[f_1\frac{\partial f_3}{\partial x} - f_2\frac{\partial f_3}{\partial y} - f_3\frac{\partial f_1}{\partial x} + f_3\frac{\partial f_2}{\partial y}\right]\right]A\\
+ B^2 \left[ f_2f_3 \frac{\partial f_3}{\partial x} -
f_3\frac{\partial f_2}{\partial x}\right]. $
\end{Lemma}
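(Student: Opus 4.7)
The plan is to rewrite the Frobenius integrability condition $\omega\wedge d\omega=0$ as a scalar equation and then use the tangency relation $\omega(X)=0$ to eliminate the coefficient $C$; after clearing denominators the displayed identity appears. The argument is essentially one substitution followed by a careful bookkeeping step.

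First, a direct coordinate expansion gives
\[
\omega\wedge d\omega=\bigl[A(\partial_y C-\partial_z B)+B(\partial_z A-\partial_x C)+C(\partial_x B-\partial_y A)\bigr]\,dx\wedge dy\wedge dz,
\]
so integrability is equivalent to the vanishing of the scalar bracket. On the open set where $f_3\neq 0$ the assumption $\omega(X)=f_1 A+f_2 B+f_3 C=0$ can be solved for
\[
C=-\frac{f_1 A+f_2 B}{f_3}.
\]
I would then compute $\partial_x C$ and $\partial_y C$ by the quotient rule; each partial produces terms in $A_x, B_x$ (respectively $A_y, B_y$) together with terms in the derivatives $f_{i,x}, f_{i,y}$ multiplied by $A$ or $B$, all carrying a factor $1/f_3^2$.

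Next, substituting $C$ and its two partials into the bracket and multiplying through by $f_3^2$ produces a polynomial relation among $A$, $B$, their first partials, and the functions $f_i$ and $f_{i,j}$. Collecting the result by the distinct monomials $B A_x$, $B A_y$, $B A_z$, $A B_x$, $A B_y$, $A B_z$, $A^2$, $AB$, $B^2$ produces a polynomial identity of exactly the shape displayed in the statement. The hypothesis of the lemma is precisely the assertion that this rearranged identity holds, so it forces $\omega\wedge d\omega=0$ on $\{f_3\neq 0\}$, and hence everywhere by analytic continuation of the holomorphic three-form.

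The only real obstacle is computational: one must carry the quotient rule through with sign discipline, since several terms of the form $f_i\,f_{j,k}\cdot A\cdot B$ appear from both the $B\,\partial_x C$ and $A\,\partial_y C$ contributions, and the minus sign in $C=-(f_1 A+f_2 B)/f_3$ flips roughly half of them. Some pairs of terms (for example the contributions to $A\,\partial_y A$ coming from $-\,f_3^2 C\,\partial_y A$ and from the $f_1\,\partial_y A$ piece of $f_3^2 A\,\partial_y C$) cancel exactly, and tracking these cancellations is what delivers the clean form of the right-hand side. No further ideas are needed.
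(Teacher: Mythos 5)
Your proposal is correct and follows essentially the same route as the paper: expand $\omega\wedge d\omega$ into the scalar bracket, use $\omega(X)=0$ to solve for $C=-(f_1A+f_2B)/f_3$, substitute $C$ and its quotient-rule partials, clear the $f_3^2$ denominator, and collect by monomials; the cancellations you flag (e.g.\ the $A\,\partial_y A$ terms) are exactly the ones that occur in the paper's computation. The only addition you make is the explicit remark about working on $\{f_3\neq 0\}$ and extending by continuity, which the paper leaves implicit.
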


\begin{proof}
From  $\omega(X) = 0$ we have $ A f_1 + B f_2 + C f_3 = 0.$ Hence,
\begin{eqnarray} \label{expC}
C = -\frac{(A f_1 + B f_2)}{f_3}.
\end{eqnarray}
The partial derivatives of $C$ can be calculated with the aid of
(\ref{expC}) resulting in
\begin{eqnarray}\label{DCx}
\frac{\partial C}{\partial x} = -\frac{f_1}{f_3}\frac{\partial
A}{\partial x} -\frac{1}{f_3}\frac{\partial f_1}{\partial x}A
-\frac{f_2}{f_3} \frac{\partial B}{\partial x} -
\frac{1}{f_3}\frac{\partial f_2}{\partial x}B +
\frac{f_1}{f_3^2}\frac{\partial f_3}{\partial x}A +
\frac{f_2}{f_3^2}\frac{\partial f_3}{\partial x}B
\end{eqnarray}
and,
\begin{eqnarray}\label{DCy}
\frac{\partial C}{\partial y} = -\frac{f_1}{f_3}\frac{\partial
A}{\partial y} -\frac{1}{f_3}\frac{\partial f_1}{\partial y}A
-\frac{f_2}{f_3} \frac{\partial B}{\partial y} -
\frac{1}{f_3}\frac{\partial f_2}{\partial y}B +
\frac{f_1}{f_3^2}\frac{\partial f_3}{\partial y}A +
\frac{f_2}{f_3^2}\frac{\partial f_3}{\partial y}B.
\end{eqnarray}
On the other hand, from the expression of $\omega$ we have
\begin{eqnarray*}
d\omega &=& dA\wedge dx + dB\wedge dy + dC\wedge dz \\
&=& \left( \frac{\partial B}{\partial x} -  \frac{\partial
A}{\partial y}  \right)dx\wedge dy + \left( \frac{\partial
C}{\partial x} -  \frac{\partial A}{\partial z}  \right)dx\wedge dz
+ \left( \frac{\partial C}{\partial y} -  \frac{\partial B}{\partial
z}  \right)dy\wedge dz.
\end{eqnarray*}
Thus,
\begin{eqnarray*}
\omega \wedge d\omega &=& A\left( \frac{\partial C}{\partial y} -  \frac{\partial B}{\partial z}  \right)dx\wedge dy\wedge dz + B\left( \frac{\partial C}{\partial x} -  \frac{\partial A}{\partial z}  \right)dy\wedge dx\wedge dz \\
&+& C\left( \frac{\partial B}{\partial x} -  \frac{\partial A}{\partial y}  \right)dz\wedge dx\wedge dz \\
&=& \left[ A\left( \frac{\partial C}{\partial y} -  \frac{\partial
B}{\partial z}  \right) - B\left( \frac{\partial C}{\partial x} -
\frac{\partial A}{\partial z}  \right)  + C\left( \frac{\partial
B}{\partial x} -  \frac{\partial A}{\partial y}  \right) \right]
dx\wedge dz \wedge dz.
\end{eqnarray*}
Then, $\omega\wedge d\omega = 0$ implies $ A\frac{\partial
C}{\partial y} - A\frac{\partial B}{\partial z} - B \frac{\partial
C}{\partial x} + B\frac{\partial A}{\partial z}  + C\frac{\partial
B}{\partial x} - C\frac{\partial A}{\partial y}  = 0.$ Replacing
(\ref{DCx}) and (\ref{DCy}) in the above expression we obtain
\begin{eqnarray*}
&-&\frac{f_1}{f_3}\frac{\partial A}{\partial y}A - \frac{1}{f_3}\frac{\partial f_1}{\partial y}A^2 - \frac{f_2}{f_3}\frac{\partial B}{\partial y}A - \frac{1}{f_3}\frac{\partial f_2}{\partial y} BA + \frac{f_1}{f_3^2} \frac{\partial f_3}{\partial y} A^2 + \frac{f_2}{f_3^2}\frac{\partial f_3}{\partial y} BA \\
&+& \frac{f_1}{f_3}B \frac{\partial A}{\partial x} + \frac{1}{f_3}\frac{\partial f_1}{\partial x}BA +\frac{f_2}{f_3}\frac{\partial B}{\partial x}B + \frac{1}{f_3}\frac{\partial f_2}{\partial x}B^2 - \frac{f_1}{f_3^2}\frac{\partial f_3}{\partial x}BA - \frac{f_2}{f_3^2}\frac{\partial f_3}{\partial x}B^2 \\
&+& B\frac{\partial A}{\partial z} - \frac{f_1}{f_3}\frac{\partial
B}{\partial x}A - \frac{f_2}{f_3}\frac{\partial B}{\partial x}B +
\frac{f_1}{f_3}\frac{\partial A}{\partial y}A + \frac{f_2}{f_3}B
\frac{\partial A}{\partial y} + A\frac{\partial B}{\partial z} = 0.
\end{eqnarray*}
Simplification of this equation then gives
\begin{eqnarray*}
&-& \frac{1}{f_3}\frac{\partial f_1}{\partial y}A^2 - \frac{f_2}{f_3}\frac{\partial B}{\partial y}A - \frac{1}{f_3}\frac{\partial f_2}{\partial y} BA + \frac{f_1}{f_3^2} \frac{\partial f_3}{\partial y} A^2 + \frac{f_2}{f_3^2}\frac{\partial f_3}{\partial y} BA \\
&+& \frac{f_1}{f_3}B \frac{\partial A}{\partial x} + \frac{1}{f_3}\frac{\partial f_1}{\partial x}BA + \frac{1}{f_3}\frac{\partial f_2}{\partial x}B^2 - \frac{f_1}{f_3^2}\frac{\partial f_3}{\partial x}BA - \frac{f_2}{f_3^2}\frac{\partial f_3}{\partial x}B^2 \\
&+& B\frac{\partial A}{\partial z} - \frac{f_1}{f_3}\frac{\partial
B}{\partial x}A + \frac{f_2}{f_3}B \frac{\partial A}{\partial y} +
A\frac{\partial B}{\partial z} = 0.
\end{eqnarray*}
Reorganizing the terms we get
\begin{eqnarray*}
\nonumber\frac{f_1}{f_3}B\frac{\partial A}{\partial x} &+& \frac{f_2}{f_3}B\frac{\partial A}{\partial y} + B\frac{\partial A}{\partial z}\\ &=& \left( \frac{1}{f_3} \frac{\partial f_1}{\partial y} - \frac{f_1}{f_3^2} \frac{\partial f_3}{\partial y} \right)A^2  \\
&+&\left[\frac{f_1}{f_3} \frac{\partial B}{\partial x} + \frac{f_2}{f_3}\frac{\partial B}{\partial y} - \frac{\partial B}{\partial z} + B\left[\frac{f_1}{f_3^2} \frac{\partial f_3}{\partial x} - \frac{f_2}{f_3^2} \frac{\partial f_3}{\partial y} - \frac{1}{f_3}\frac{\partial f_1}{\partial x} + \frac{1}{f_3}\frac{\partial f_2}{\partial y}\right]\right]A\\
&+& B^2 \left[ \frac{f_2}{f_3} \frac{\partial f_3}{\partial x} -
\frac{1}{f_3} \frac{\partial f_2}{\partial x}\right].
\end{eqnarray*}
Multiplication by $f_3^2$ ends the proof.
\end{proof}

\begin{proof}[Proof of Theorem~\ref{Theorem:integrableHill}]
We look for an integrable complex analytic one-form $\omega$ in
$\mathbb C^2 \times U$ such that $\omega(X)=0$. Let us write
$\omega(x,y,z) = A(x,y,z)dx + B(x,y,z)dy + C(x,y,z)dz $ where the
coefficients $A,B,C$ are complex analytic in $\mathbb C^2\times U$.
We know from Lemma~\ref{eqInteg} that $\omega = A(x,y,z)dx +
B(x,y,z)dy + C(x,y,z)dz$ satisfying  $\omega(X) \equiv 0$
 also satisfies  $\omega\wedge d\omega=0$
provided that
\begin{eqnarray*}
yB\frac{\partial A}{\partial x} -   [b(z)x]B\frac{\partial
A}{\partial y} + B\frac{\partial A}{\partial z}  = A^2  + \left[y
\frac{\partial B}{\partial x} - [b(z)x]\frac{\partial B}{\partial y}
- \frac{\partial B}{\partial z}\right]A + b(z)B^2
\end{eqnarray*}
and
\begin{eqnarray} \label{C}
C = -(-yA - b(z)xB)= yA + xp(z)B.
\end{eqnarray}
Let us take  $B\equiv 1$ in the last equation obtaining
\begin{eqnarray} \label{eqInt2lin}
y\frac{\partial A}{\partial x} - [b(z)x]
\frac{\partial A}{\partial y} + \frac{\partial A}{\partial z} = A^2  + b(z).
\end{eqnarray}

Equation~\eqref{eqInt2lin} is a semilinear first order PDE, which
can be solved in real case by the classical {\em method of
characteristics}. Let us try this same method in our complex
framework. For this we introduce the following system:

\[
\frac{dx}{dt} = y, \, \frac{dy}{dt}= - b(z)x, \, \frac{dz}{dt}= 1,
\, \frac{d\phi}{dt}=A^2 + b
\]

Then, from this we obtain:

\begin{Claim} \label{verifiq}
Equation  \emph{(\ref{eqInt2lin})} admits the solution $A =
\frac{-y}{x}$.
\end{Claim}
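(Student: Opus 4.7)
The claim is purely a verification: given a closed-form candidate for $A$, one simply plugs it into the PDE \eqref{eqInt2lin} and checks that both sides agree. So my plan is simply to carry out that substitution explicitly.

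First I would compute the three partial derivatives of $A(x,y,z) = -y/x$, namely
\[
\frac{\partial A}{\partial x} = \frac{y}{x^2}, \qquad \frac{\partial A}{\partial y} = -\frac{1}{x}, \qquad \frac{\partial A}{\partial z} = 0,
\]
the last one holding because $A$ does not depend on $z$. Then I would assemble the left-hand side of \eqref{eqInt2lin}:
\[
y\cdot\frac{y}{x^2} \;-\; b(z)x\cdot\Bigl(-\frac{1}{x}\Bigr) \;+\; 0 \;=\; \frac{y^2}{x^2} + b(z).
\]
Finally I would observe that $A^2 + b(z) = y^2/x^2 + b(z)$ as well, so both sides coincide and the identity holds. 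This is essentially a one-line calculation.

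There is no real obstacle here; the only mildly delicate point is that $A = -y/x$ is not defined on $\{x=0\}$, so strictly speaking the verification is valid on $\mathbb{C}^3 \setminus \{x=0\}$. Since the previous lemma asks only for a complex analytic one-form on an open set where $\omega(X)\equiv 0$ and $\omega\wedge d\omega=0$, this restriction is harmless: working off the hypersurface $\{x=0\}$, or equivalently clearing denominators (so that $\omega$ extends analytically everywhere by multiplication with $x$), produces a genuine integrable one-form on all of $\mathbb{C}^2\times U$. I would mention this point briefly after the computation to make explicit that the candidate does give rise to a well-defined Hill form.
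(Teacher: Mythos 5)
Your verification is correct and is essentially identical to the paper's own proof: both compute $\partial A/\partial x = y/x^2$, $\partial A/\partial y = -1/x$, $\partial A/\partial z = 0$ and substitute into \eqref{eqInt2lin} to find $y^2/x^2 + b(z)$ on both sides (indeed your version is cleaner, since the paper's displayed check carries some vestigial $a(z)$ terms from a more general equation). Your closing remark about clearing the pole along $\{x=0\}$ matches what the paper does immediately afterwards by multiplying $\Omega$ by $x$.
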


In fact, for $A = \frac{-y}{x}$ we have $\frac{\partial A}{\partial
x} = \frac{y}{x^2}$, $\frac{\partial A}{\partial y} = -\frac{1}{x}$
e $\frac{\partial A}{\partial z} = 0$. Substituting in
(\ref{eqInt2lin}) we have
\[
y\frac{y}{x^2} - [a(z)y + b(z)x](-\frac{1}{x}) + 0 = \frac{y^2}{x^2} - a(z)\frac{-y}{x}+ b(z).
\]
This proves Claim~\ref{verifiq}.

Hence, substituting the values of $A$ and $B$ in \eqref{C}, we have
$C = \frac{y^2}{x} + b(z)x$. Thus, the meromorphic 1-form $\Omega:=
-\frac{y}{x}dx + dy + \left[ \frac{y^2}{x} + b(z)x \right] dz$
satisfies $\Omega(X)=0$. Multiplying $\Omega$ by its poles we obtain
$\omega =x. \Omega= -ydx + xdy + [y^2  + b(z)x^2]dz,$ which is a
complex analytic solution to the integrability problem, proving the
theorem.
\end{proof}

From now on we shall consider the case where the function $b$ is an
entire  periodic function $b=p(z)$ ie., there is a complex number $T
\in \mathbb C\setminus \{0\}$ such that  $p(z+T) = p(z)$. Therefore,
according to Theorem~\ref{Theorem:integrableHill} we can define a
complex analytic foliation $\mathcal H$ associate to the  Hill
equation (\ref{HEqComplex}).
\begin{Definition} \rm{
We shall refer to the foliation  $\mathcal H$ in $\mathbb C^3$
defined by
\begin{eqnarray*}
\omega_{\H} = -ydx + xdy + [y^2 + p(z)x^2]dz = 0
\end{eqnarray*}
as  \emph{Hill foliation} of parameter $p$, where  $p$ is a periodic
complex analytic function of period  $T$ (real or complex).}
\end{Definition}

A first question involving Hill foliations is the following:
\begin{Question}
Is there any type of first integral for a Hill foliation?

\end{Question}

Let us begin with the very basic cases.

\vglue.1in \noindent{Case $p(z)=0$}. In this case we have the
corresponding Hill foliation  $-ydx + xdy + y^2dz = 0$. This
foliation exhibits a rational first integral given by $y/x + z$.

\vglue.1in \noindent{\bf Case  $p(z)=1$}. Here  we have $\omega=- y
dx + x dy + (y^2 + x^2 ) dz= x^2 \big[ d(y/x) + ((y/x)^2 +
1)dz\big]$. This form admits a Liouvillian first integral. Indeed,
$\omega/[x^2(1+ (y/x)^2]$ is closed and rational and can be
integrated by logarithmic.

The interesting cases appear when we consider $p(z)$ a non-constant
periodic function. In this situation we shall ask for the existence
of a first integral for the corresponding Hill foliation.
%%%%%%%%%%%%%%%%%%%%%%%%%%%%%%%%%%%%%%%%%%%%%%%%%%%%%%%%%%%%%%%%%%%

\section{The Hill fundamental form}
In this section we study the special case of the Hill equation where
the periodic function is the exponential function, $p(z)= e^z$. From
this special case we shall obtain a fundamental form that shall play
an important role in our study of the general case. We start with
the differential form given above $\omega = -ydx + xdy + [y^2 + e^z
x^2]dz.$  Notice that
\[
\frac{1}{x^2} \omega = \frac{-ydx + xdy}{x^2}  + [\frac{y^2}{x^2}  +
e^z]dz = d(\frac{y}{x}) + [(\frac{y}{x})^2 + e^z)dz.
\]

Let us call $\phi=-y/x, \psi = e^z$. Then $dz=d\psi/\psi$ and we can
rewrite
\[
\frac{1}{x^2} \omega = d(\frac{y}{x}) + [(\frac{y}{x})^2 + e^z)dz =
-d\phi + [\phi^2 + \psi]\frac{d\psi}{\psi} =
\frac{1}{\psi}\big[-\psi d\phi + [\phi^2 + \psi]d\psi\big].
\]

In a modern language, if we define the rational map $\Pi\colon
\mathbb C^3 \dashrightarrow \mathbb P^1 \times \mathbb P^1$ by
$\Pi(x,y,z) = (\phi, \psi)=(-\frac{y}{x},e^z)$ then we shall state:
\begin{Lemma}
The Hill foliation $\mathcal H$ on $\mathbb C^3$ given by
\begin{eqnarray*}
 -ydx + xdy + [y^2 + e^z x^2]dz=0
\end{eqnarray*}
is the  pull-back by the rational map $\Pi$ above of the
two-dimension foliation $\mathcal H_ 2$ on $\mathbb P^1 \times
\mathbb P^1$ given on affine coordinates $(x,y)$ by
\begin{eqnarray*}
 -y dx + (x^2 + y)dy=0
\end{eqnarray*}
\end{Lemma}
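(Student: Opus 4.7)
The plan is to verify directly that the pull-back $\Pi^{*}\omega_{2}$ of the planar one-form $\omega_{2}$ defining $\mathcal H_{2}$ agrees with $\omega_{\mathcal H}$ up to a nonzero meromorphic factor, so that the two codimension-one foliations coincide on the locus where $\Pi$ is defined and hence, by analytic continuation, everywhere.

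To avoid the notational clash between the affine coordinates $(x,y)$ on the target $\mathbb P^{1}\times\mathbb P^{1}$ and the first two coordinates on $\mathbb C^{3}$, I would begin by relabelling the affine coordinates on $\mathbb P^{1}\times\mathbb P^{1}$ as $(\phi,\psi)$, so that $\omega_{2}=-\psi\,d\phi+(\phi^{2}+\psi)\,d\psi$ and $\Pi(x,y,z)=(\phi,\psi)=(-y/x,\,e^{z})$. Next I would compute $d\phi=\frac{y}{x^{2}}dx-\frac{1}{x}dy$ and $d\psi=e^{z}\,dz$, substitute these into $\omega_{2}$, and collect terms; multiplying the result by the nonvanishing meromorphic factor $x^{2}/e^{z}$ should return $\omega_{\mathcal H}=-y\,dx+x\,dy+(y^{2}+e^{z}x^{2})\,dz$. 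This is simply the identity
\[
\frac{1}{x^{2}}\,\omega_{\mathcal H}\;=\;\frac{1}{\psi}\bigl[-\psi\,d\phi+(\phi^{2}+\psi)\,d\psi\bigr]
\]
already displayed in the paragraph preceding the lemma, read in the opposite direction: it exhibits $\omega_{\mathcal H}$ and $\Pi^{*}\omega_{2}$ as meromorphic multiples of one another on the open set $\{x\neq 0\}\subset\mathbb C^{3}$.

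To conclude that this genuinely defines a foliation on $\mathbb P^{1}\times\mathbb P^{1}$ I would note that $\omega_{2}$ is automatically integrable, since $\omega_{2}\wedge d\omega_{2}$ is a $3$-form on a two-dimensional surface and therefore identically zero; thus $\omega_{2}$ defines a singular holomorphic foliation $\mathcal H_{2}$. Its pull-back by the rational map $\Pi$ then coincides with $\mathcal H$ on the Zariski-open set $\{x\neq 0\}$, and the identity of foliations extends to all of $\mathbb C^{3}$ since both are defined by holomorphic one-forms with codimension-two singular set.

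The only real difficulty is bookkeeping: keeping the two different uses of the symbols $(x,y)$ apart and tracking the scalar factor $x^{2}/e^{z}$ that converts $\Pi^{*}\omega_{2}$ into $\omega_{\mathcal H}$. No conceptual obstacle appears, because the lemma is essentially a reformulation in the language of rational maps between surfaces of the algebraic identity already carried out in the paragraph preceding the statement.
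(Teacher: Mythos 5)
Your proposal is correct and follows exactly the route the paper takes: the paper offers no separate proof of this lemma, relying on the identity $\frac{1}{x^{2}}\,\omega_{\mathcal H}=\frac{1}{\psi}\bigl[-\psi\,d\phi+(\phi^{2}+\psi)\,d\psi\bigr]$ computed in the preceding paragraph, which is precisely the calculation you reproduce (with the correct relation $\Pi^{*}\omega_{2}=\frac{e^{z}}{x^{2}}\,\omega_{\mathcal H}$). Your additional remarks on the coordinate clash, the automatic integrability of a one-form in dimension two, and the extension of the identity of foliations across $\{x=0\}$ are sound and, if anything, make the argument more complete than the paper's.
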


\begin{Definition}
{\rm The 1-form
\begin{eqnarray}\label{OmegaExp}
\Omega_{2} = - ydx + \left(x^2 + y\right)dy
\end{eqnarray}
will be called the {\it Hill fundamental form} or just {\it Hill
form} on $\mathbb C^2$. } \end{Definition}

It is our belief that the Hill form given in (\ref{OmegaExp}) will
play a key role in the study of  Hill foliations. In what follows we
give a more detailed study of this form. We shall prove that it does
not admit  first integral of Liouvillian type, but it admits a first
integral which is given in terms of Bessel functions. Bessel
functions are described in details in \cite{watson}. In what follows
we present a short summary of their properties. Details and proofs
can be found in \cite{watson}.

\subsection{Bessel functions}
The Bessel functions appear as solutions to the classical {\it
Bessel equation}
\begin{eqnarray*}
z^2u'' + zu' + (z^2 -k^2)u = 0.
\end{eqnarray*}
These equations are related to problems in physics as vibrations,
diffusion and electromagnetic   waves. Originally defined by
Bernoulli they were formally introduced by F. Bessel in 1824 while
doing his astronomy research.
\begin{Definition}\rm{ The
function
\begin{eqnarray*}
J_{\nu}(z) = \sum_{k=0}^{\infty} \frac{(-1)^{k}}{\Gamma(k+1)\Gamma(\nu + k + 1)} \left(\frac{z}{2}\right)^{2k+\nu}
\end{eqnarray*}
is called \emph{Bessel function of first type, and order $\nu$}.
Here  $\Gamma$ is the {\it Gamma function} defined by
\begin{eqnarray*}
\Gamma(s)= \int_{0}^{\infty} e^{-t} t^{s-1} dt
\end{eqnarray*}
where  $\nu$  is a number (real or complex).}
\end{Definition}

\begin{Proposition}\label{PropBessel}
The first type Bessel functions satisfy the following properties for
each  $\nu \in \mathbb{C}$:
\begin{eqnarray*}
2J^\prime_{\nu}(z)= J_{\nu-1}(z) - J_{\nu +1}(z), \, \, J_{\nu
+1}(z) = \frac{2\nu}{z}J_{\nu}(z) - J_{\nu -1}(z);
\end{eqnarray*}
Moreover, for every $n \in \mathbb{Z}$ we have: $J_{-n}(z) =
(-1)^nJ_{n}(z),$ where $J^\prime$ denotes the usual derivative with
respect to  $z$.
\end{Proposition}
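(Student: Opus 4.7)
The plan is to derive all three identities directly from the power-series definition of $J_\nu$, exploiting the functional equation $\Gamma(s+1)=s\,\Gamma(s)$ and the fact that $1/\Gamma$ vanishes at non-positive integers. I will first establish the two multiplicative derivative formulas
\begin{equation*}
\frac{d}{dz}\bigl[z^{\nu} J_{\nu}(z)\bigr] = z^{\nu} J_{\nu-1}(z),
\qquad
\frac{d}{dz}\bigl[z^{-\nu} J_{\nu}(z)\bigr] = -z^{-\nu} J_{\nu+1}(z),
\end{equation*}
and then combine them to obtain the two recurrences stated in the proposition.

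For the first formula, I multiply the series term by term by $z^{\nu}$, obtaining an expansion of $z^{\nu}J_{\nu}(z)$ with general term $(-1)^k z^{2k+2\nu}/[2^{2k+\nu}\Gamma(k+1)\Gamma(\nu+k+1)]$. Differentiating in $z$ produces a factor of $2k+2\nu$ in the numerator, which cancels with a factor of $\Gamma(\nu+k+1)=(\nu+k)\Gamma(\nu+k)$ in the denominator; after factoring out $z^{\nu}$ one recognizes the series for $J_{\nu-1}(z)$. The second formula is obtained in exactly the same way, but now differentiation of $z^{-\nu}\cdot z^{2k+\nu}=z^{2k}$ yields $2k$, which cancels $\Gamma(k+1)=k\Gamma(k)$ and forces the summation to start at $k=1$; reindexing with $k\mapsto k+1$ recovers the series for $J_{\nu+1}(z)$ with the expected sign. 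Expanding the derivatives on the left-hand sides by the Leibniz rule gives
\begin{equation*}
J'_{\nu}(z) + \tfrac{\nu}{z}J_{\nu}(z) = J_{\nu-1}(z),
\qquad
J'_{\nu}(z) - \tfrac{\nu}{z}J_{\nu}(z) = -J_{\nu+1}(z),
\end{equation*}
and then adding yields $2J'_{\nu}(z)=J_{\nu-1}(z)-J_{\nu+1}(z)$ while subtracting yields $J_{\nu+1}(z)=\tfrac{2\nu}{z}J_{\nu}(z)-J_{\nu-1}(z)$.

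For the identity $J_{-n}(z)=(-1)^{n}J_{n}(z)$ with $n\in\mathbb{Z}$, I use the convention that $1/\Gamma(s)$ is entire with zeros exactly at the non-positive integers $s\in\{0,-1,-2,\dots\}$. Setting $\nu=-n$ in the series gives terms with denominator $\Gamma(-n+k+1)$, which vanishes precisely for $k=0,1,\dots,n-1$, so the sum effectively starts at $k=n$. Making the substitution $k=m+n$, the exponent $2k-n$ becomes $2m+n$, and using $\Gamma(m+n+1)$ and $\Gamma(m+1)$ in the proper places, the series collapses exactly to $(-1)^{n}J_{n}(z)$ after the prefactor $(-1)^{n}$ is pulled out from $(-1)^{k}=(-1)^{m+n}$.

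The main obstacle is purely bookkeeping: keeping the gamma-function shifts and sign conventions consistent under the index translations. Analytical convergence is not an issue because the series for $J_{\nu}$ converges uniformly on compact subsets of $\mathbb{C}$, so term-by-term differentiation and reindexing are fully justified.
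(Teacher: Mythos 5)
Your proposal is correct. Note that the paper itself gives no proof of this proposition: it is stated as a summary of classical facts with the details deferred to Watson's treatise \cite{watson}. Your derivation is the standard one from the defining power series, and all the steps check out: the two multiplicative identities $\frac{d}{dz}[z^{\pm\nu}J_{\nu}(z)]=\pm z^{\pm\nu}J_{\nu\mp1}(z)$ follow by termwise differentiation, their Leibniz expansions give $J'_{\nu}+\frac{\nu}{z}J_{\nu}=J_{\nu-1}$ and $J'_{\nu}-\frac{\nu}{z}J_{\nu}=-J_{\nu+1}$, and adding/subtracting yields the two recurrences exactly as stated. The one point worth making explicit is that the cancellation $(\nu+k)/\Gamma(\nu+k+1)=1/\Gamma(\nu+k)$ must be read as an identity of the entire function $1/\Gamma$, valid even when $\nu+k$ is a non-positive integer; you invoke precisely this convention in the last part, where the vanishing of $1/\Gamma$ at $0,-1,-2,\dots$ kills the first $n$ terms of the series for $J_{-n}$ and the reindexing $k=m+n$ produces $(-1)^{n}J_{n}$. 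Since $(-1)^{n}=(-1)^{-n}$, the case $n<0$ follows from the case $n>0$ by symmetry, so the statement holds for all $n\in\mathbb{Z}$. Your closing remark that locally uniform convergence justifies termwise differentiation and reindexing is the right thing to say and closes the argument.
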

\begin{Remark}
The functions $J_{-\nu},J_{\nu}$ are known to be linearly
independent in the case where $\nu \not\in \mathbb{Z}$.
\end{Remark}

\begin{Lemma}
The Bessel functions $J_0$ and $J_1$ of first type and order $0$ and
$1$, are entire functions of the complex variable $z$.
\end{Lemma}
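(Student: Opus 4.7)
The plan is to reduce the statement to the convergence of the explicit power series defining $J_0$ and $J_1$, and then apply the ratio test (or root test) to show that the radii of convergence are infinite. Since the series consists only of non-negative integer powers of $z$ once $\nu \in \{0,1\}$, convergence on all of $\mathbb{C}$ will yield entireness.

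First, I would substitute $\nu = 0$ and $\nu = 1$ into the defining formula
\[
J_{\nu}(z) = \sum_{k=0}^{\infty} \frac{(-1)^{k}}{\Gamma(k+1)\Gamma(\nu + k + 1)} \left(\frac{z}{2}\right)^{2k+\nu},
\]
and use the identity $\Gamma(n+1) = n!$ for $n \in \mathbb{N}$ to rewrite
\[
J_{0}(z) = \sum_{k=0}^{\infty} \frac{(-1)^{k}}{(k!)^2} \left(\frac{z}{2}\right)^{2k}, \qquad J_{1}(z) = \sum_{k=0}^{\infty} \frac{(-1)^{k}}{k!\,(k+1)!} \left(\frac{z}{2}\right)^{2k+1}.
\]
Both are honest power series in $z$, with no fractional or negative exponents, so it suffices to bound the growth of the coefficients.

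Next, I would compute, for each of the two series, the ratio of the absolute values of consecutive terms. For $J_0$, the quotient of the $(k+1)$-th by the $k$-th term is $|z|^2/\bigl(4(k+1)^2\bigr)$, which tends to $0$ for every fixed $z \in \mathbb{C}$. For $J_1$, the analogous quotient is $|z|^2/\bigl(4(k+1)(k+2)\bigr)$, also tending to $0$. By the ratio test, both series converge absolutely on all of $\mathbb{C}$, and the convergence is uniform on compact subsets (since the bound depends only on $|z|$). Consequently, each defines a holomorphic function on the whole plane, that is, an entire function.

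I do not anticipate a genuine obstacle; the argument is essentially a routine application of the ratio test. The only points requiring care are the initial substitution into the general formula (one must keep track of which $\Gamma$ value simplifies to which factorial) and verifying that the exponent $2k+\nu$ produces an ordinary Taylor series for the two integer values $\nu = 0,1$, so that the functions are holomorphic at $z=0$ as well.
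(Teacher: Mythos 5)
Your proposal is correct and follows essentially the same route as the paper: both write out the explicit power series for $J_0$ and $J_1$ using $\Gamma(n+1)=n!$ and apply the ratio test to conclude that the radius of convergence is infinite. Your version is slightly more detailed in actually computing the consecutive-term ratios, but the argument is the same.
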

\begin{proof}
Indeed, $J_0(z)= \sum\limits_{k=0}^\infty \frac{(-1) ^k
z^{2k}}{2^{(2k)} (n!)^2}$. The ratio test shows that the convergence
radius of this power series is $\infty$. By its turn we have
$J_1(z)= \sum\limits_{k=0}^\infty \frac{(-1)^{k}
z^{(2k+1)}}{2^{(2k+1)}k! (k+1)!}$. Again the ratio test shows the
convergence in the whole complex plane.
\end{proof}

\begin{Definition}\rm{
The {\it second type Bessel function of order} $\nu\in \mathbb C$ is
defined by the expression
\begin{eqnarray*}
Y_{\nu}(z) = \frac{J_{\nu}(z) \cos(\nu\pi) - J_{-\nu}(z)}{\sin(\nu\pi)}
\end{eqnarray*}
where $\nu \not\in \mathbb{Z}$. Here $J_{\nu}, J_{-\nu}$ are first
type Bessel functions as defined above. In case  $\nu=n \in
\mathbb{Z}$ the corresponding second type Bessel function is defined
as
\begin{eqnarray*}
Y_{n}(z) = \lim_{\nu \rightarrow n}Y_{\nu}(z).
\end{eqnarray*}
}
\end{Definition}

The above limit exists. For instance,  $Y_{0}(0)$ is well-defined by
the well-known L'Hospital rule   (\cite{koronev} pg. 9).

\begin{Remark} \label{remarkBesselType}\rm{
Proposition~\ref{PropBessel} also holds for second type Bessel
functions.}
\end{Remark}

\subsection{Integrability of the Hill form}
We are now in conditions to study the integrability of the Hill
form.

\begin{theorem} \label{Theorem:integralhillform}
The Hill form
\begin{eqnarray*} \Omega_2 = -ydx+ (x^2 +y)dy
\end{eqnarray*}
admits a first integral  $F$ of the form
\begin{eqnarray*}
F(x,y) = \frac{2x Y_0(2\sqrt{y}) -
2\sqrt{y}Y_1(2\sqrt{y})}{x J_0(2\sqrt{y}) - \sqrt{y}J_1(2\sqrt{y})}
\end{eqnarray*}
where  $J_0,J_1$  are the  Bessel functions of first type and
$Y_0,Y_1$ are the Bessel functions of second type. The function $F$
is well-defined complex analytic  in open subsets of the form $(x,y)
\in \mathbb C \times (\mathbb C\setminus L)$ where $L$ is a closed
segment of line starting from the origin of $\mathbb C$.
\end{theorem}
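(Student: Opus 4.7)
The plan is to view the leaves of $\Omega_2 = 0$ as graphs $x = x(y)$ and to solve the resulting scalar ODE explicitly in terms of Bessel functions. From $-y\,dx + (x^2 + y)\,dy = 0$ one reads off the Riccati equation $dx/dy = x^2/y + 1$. Applying the classical Riccati linearization $x = -y\, u'/u$, a direct computation reduces this to the second order linear ODE
\begin{equation*}
y\, u''(y) + u'(y) + u(y) = 0.
\end{equation*}

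Next I would change variable $w = 2\sqrt{y}$; the chain rule, after multiplying by $w^2$, converts the above equation into $w^2 u_{ww} + w\, u_w + w^2 u = 0$, which is the Bessel equation of order zero. On any simply connected subdomain of $\mathbb C \setminus \{0\}$ the general solution is therefore $u(y) = c_1 J_0(2\sqrt{y}) + c_2 Y_0(2\sqrt{y})$, by the linear independence of $J_0$ and $Y_0$. Using the derivative rules $J_0' = -J_1$ and $Y_0' = -Y_1$, which follow from Proposition~\ref{PropBessel} and Remark~\ref{remarkBesselType}, one finds
\begin{equation*}
u'(y) \;=\; -\,\frac{c_1 J_1(2\sqrt{y}) + c_2 Y_1(2\sqrt{y})}{\sqrt{y}},
\end{equation*}
and substituting into $x = -y\, u'/u$ yields
\begin{equation*}
x\bigl[c_1 J_0(2\sqrt{y}) + c_2 Y_0(2\sqrt{y})\bigr] \;=\; \sqrt{y}\,\bigl[c_1 J_1(2\sqrt{y}) + c_2 Y_1(2\sqrt{y})\bigr].
\end{equation*}
Since this linear relation in $(c_1,c_2)$ is constant along each leaf, solving for the ratio $c_1/c_2$ produces, up to the inessential factor $-2$, precisely the function $F$ in the statement.

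It then remains to verify that $F$ is well-defined and holomorphic on the claimed domain. Because $J_0$ is even and $J_1$ is odd as entire functions, the combinations $J_0(2\sqrt{y})$ and $\sqrt{y}\, J_1(2\sqrt{y})$ are entire in $y$, so the denominator of $F$ is unaffected by the choice of $\sqrt{y}$. The second-type functions $Y_0$ and $Y_1$, however, carry logarithmic singularities at $0$, which forces the excision of some segment $L$ starting from the origin so that $\sqrt{y}$ and a branch of $Y_\nu(2\sqrt{y})$ become single-valued on $\mathbb C \setminus L$; on $\mathbb C \times (\mathbb C \setminus L)$ the quotient $F$ is then holomorphic off its pole locus and constant along the leaves of $\mathcal H_2$. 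The main obstacle I expect is bookkeeping around these multi-valued ingredients: one must confirm that the Bessel recurrences convert $c_1/c_2$ into exactly the numerator and denominator displayed in the theorem, and that the branch choice of $\sqrt{y}$ cancels consistently between the $J$- and $Y$-combinations so as to produce a single-valued $F$ on $\mathbb C \times (\mathbb C \setminus L)$.
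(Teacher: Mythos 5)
Your proof is correct, and it runs in the opposite direction from the paper's. The paper takes the formula for $F$ as given (after the substitution $y=e^t$ it points to handbook methods for the special Riccati equation $x'=x^2+e^t$) and then \emph{verifies} that $F$ is a first integral by computing $\partial F/\partial x$ and $\partial F/\partial y$ explicitly with the recurrences of Proposition~\ref{PropBessel} and Remark~\ref{remarkBesselType}, and checking $\Omega_2\wedge dF=0$. You instead \emph{derive} $F$: reading $\Omega_2=0$ as the Riccati equation $dx/dy=x^2/y+1$, linearizing by $x=-y\,u'/u$ to obtain $y\,u''+u'+u=0$, and recognizing Bessel's equation of order zero after $w=2\sqrt{y}$; your identities $J_0'=-J_1$, $Y_0'=-Y_1$ and the resulting linear relation are all correct, and indeed $F=-2\,c_1/c_2$ is constant on each leaf. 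Your route has the advantage of explaining where the formula comes from and why Bessel functions of orders $0$ and $1$ appear, at the cost of having to track the branch of $\sqrt{y}$ and the multivaluedness of $Y_\nu$, which you handle correctly (the $J$-combinations $J_0(2\sqrt{y})$ and $\sqrt{y}\,J_1(2\sqrt{y})$ are entire in $y$, while the $Y$-combinations force the cut $L$); the paper's verification is branch-insensitive because it only uses formal differentiation identities, but it offers no insight into the origin of $F$. The one step you leave implicit is the passage from ``$F$ is constant along the solution graphs $x=x(y)$'' to ``$dF\wedge\Omega_2=0$'' on the whole domain; this follows by analyticity from the open dense set where the leaves are graphs over $y$, so it is routine and does not affect correctness.
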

\begin{proof}
First we write $\Omega_2=0$ as $ydx= (x^2 + y ) dy $ and therefore
as
\begin{equation}
\label{eq:Hillform} \frac{dy}{dx} = \frac{y}{x^2+ y}
\end{equation}
 This ODE is similar to some of the models treated in
\cite{handbook} page 375, \S  13.2.3. Indeed, the change of
coordinates $y = e^t$ leads to the model
\[
x^\prime = \frac{dx}{dt}= \frac{dx}{dy} \frac{dy}{dt} = \frac{x^2 +
y} {y} \frac{dx}{dt} = \frac{x^2 + e^t} { e^t} e^t = x^2 + e^t
\]
By its turn the ODE $x^\prime =  x^2 + e^t$ can be solved by methods
similar to those in  \cite{handbook} 13.2.2.5 and leads us to the
following:
\begin{Claim}
The solutions of \eqref{eq:Hillform} are given by $F(x,y)=c\in
\mathbb C$ where
\[
F(x,y) = \frac{2x Y_0(2\sqrt{y}) -  2\sqrt{y}Y_1(2\sqrt{y})}{x
J_0(2\sqrt{y}) - \sqrt{y}J_1(2\sqrt{y})}
\]
\end{Claim}

\begin{proof}

We shall first calculate the partial derivatives of $F$ . The first
is obtained by a standard computation
\begin{eqnarray*}
\frac{\partial F}{\partial x}(x,y) & =& \frac{2 Y_0(2\sqrt{y})(x
J_0(2\sqrt{y}) - \sqrt{y}J_1(2\sqrt{y})) -  J_0(2\sqrt{y})(2x
Y_0(2\sqrt{y}) -
2\sqrt{y}Y_1(2\sqrt{y}))}{(x J_0(2 \sqrt{y}) - \sqrt{y} J_1(2 \sqrt{y}))^2}\\
&=& \frac{2 \sqrt{y} \left( J_0(2\sqrt{y}) Y_1(2\sqrt{y}) -
J_1(2\sqrt{y}) Y_0(2 \sqrt{y})\right)}{(x J_0(2 \sqrt{y}) - \sqrt{y}
J_1(2 \sqrt{y}))^2}.
\end{eqnarray*}
\\
\\
\textbf{The partial derivative $\frac{\partial F}{\partial y}$:}
\\
\\
In this case we shall make use of the properties of the Bessel
functions mentioned above. Let us first analyze each term of the
derivative. From Proposition~\ref{PropBessel} we have
\begin{eqnarray*}
\frac{\partial}{\partial y}[x J_0(2\sqrt{y})]] &=& \frac{x}{\sqrt{y}}[J_{-1}(2\sqrt{y}) - J_{1}(2\sqrt{y})] \\
&=& \frac{x}{\sqrt{y}}[- J_{1}(2\sqrt{y}) - J_{1}(2\sqrt{y})] \\
&=& -2\frac{x}{\sqrt{y}}J_{1}(2\sqrt{y}).\\
\end{eqnarray*}

\begin{eqnarray*}
\frac{\partial}{\partial y}[\sqrt{y}J_1(2\sqrt{y})] &=& \frac{1}{2\sqrt{y}}J_1(2\sqrt{y}) + 2\sqrt{y}\frac{\partial[J_1(2\sqrt{y})]}{\partial y}\\
&=&  \frac{1}{2\sqrt{y}}J_1(2\sqrt{y}) + 2\sqrt{y}\left[\frac{1}{2\sqrt{y}}[J_0(2\sqrt{y}) - J_2(2\sqrt{y})]\right]\\
&=& \frac{1}{2\sqrt{y}}J_1(2\sqrt{y}) + J_0(2\sqrt{y}) - \left[\frac{1}{2\sqrt{y}}J_1(2\sqrt{y}) -  J_0(2\sqrt{y})\right]\\
&=& 2J_0(2\sqrt{y}).\\
\end{eqnarray*}
Similarly,
\begin{eqnarray*}
\frac{\partial}{\partial y}[2xY_0(2\sqrt{y})] =
-2\frac{x}{\sqrt{y}}Y_{1}(2\sqrt{y}),\, \, \,
\frac{\partial}{\partial y}[2\sqrt{y} Y_1(2\sqrt{y})] =
2Y_0(2\sqrt{y}).
\end{eqnarray*}

Thence
\begin{eqnarray*}
\frac{\partial}{\partial y}(x J_0(2\sqrt{y}) - \sqrt{y}J_1(2\sqrt{y})) =  - 2\frac{x}{\sqrt{y}}J_{1}(2\sqrt{y}) - 2J_0(2\sqrt{y})
\end{eqnarray*}
and,
\begin{eqnarray*}
\frac{\partial}{\partial y}(x Y_0(2\sqrt{y}) - \sqrt{y}Y_1(2\sqrt{y})) = - 2\frac{x}{\sqrt{y}}Y_{1}(2\sqrt{y}) - 2Y_0(2\sqrt{y}) .
\end{eqnarray*}
Henceforth,
\begin{eqnarray*}
&& \frac{\partial}{\partial y}\left[ \frac{2x Y_0(2\sqrt{y}) - 2\sqrt{y}Y_1(2\sqrt{y})}{x J_0(2\sqrt{y}) - \sqrt{y}J_1(2\sqrt{y})}\right] \\
\\
&=& \frac{N}{(x J_0(2\sqrt{y}) - \sqrt{y}J_1(2\sqrt{y}))^2}
\end{eqnarray*}
where by its turn
\begin{eqnarray*}
N &=& [- 2Y_0(2\sqrt{y}) - 2\frac{x}{\sqrt{y}}Y_{1}(2\sqrt{y})][x J_0(2\sqrt{y}) - \sqrt{y}J_1(2\sqrt{y})] \\
&-& [- 2J_0(2\sqrt{y}) - 2\frac{x}{\sqrt{y}}J_{1}(2\sqrt{y})][x Y_0(2\sqrt{y}) - \sqrt{y}Y_1(2\sqrt{y})] \\
\\
&=& -2xY_0(2\sqrt{y})J_0(2\sqrt{y}) + 2\sqrt{y}Y_0(2\sqrt{y})J_1(2\sqrt{y}) - 2\frac{x^2}{\sqrt{y}}Y_{1}(2\sqrt{y})J_0(2\sqrt{y})  \\
&+& 2xY_{1}(2\sqrt{y})J_1(2\sqrt{y}) + 2xJ_0(2\sqrt{y})Y_0(2\sqrt{y}) - 2\sqrt{y}J_0(2\sqrt{y})Y_1(2\sqrt{y}) \\
&+& 2\frac{x^2}{\sqrt{y}}J_{1}(2\sqrt{y})Y_0(2\sqrt{y}) - 2xJ_{1}(2\sqrt{y})Y_1(2\sqrt{y})\\
\\
&=& 2\left[\sqrt{y}+ \frac{x^2}{\sqrt{y}}\right]\left[J_1(2\sqrt{y})Y_0(2\sqrt{y}) - Y_{1}(2\sqrt{y})J_0(2\sqrt{y}) \right]
\end{eqnarray*}
\\

The exterior derivative $dF$ is then given by
\begin{eqnarray*}
dF &=& \frac{\partial F}{\partial x}dx +
\frac{\partial F}{\partial y}dy \\
&=& \left[\frac{2 \sqrt{y} \left( J_0(2\sqrt{y})
 Y_1(2\sqrt{y}) - J_1(2\sqrt{y}) Y_0(2 \sqrt{y})\right)}{(x J_0(2 \sqrt{y}) -
  \sqrt{y} J_1(2 \sqrt{y}))^2} \right]dx + \left[\frac{N}{(x J_0(2\sqrt{y}) -
  \sqrt{y}J_1(2\sqrt{y}))^2}\right]dy.
\end{eqnarray*}

Therefore,
\begin{eqnarray*}
\Omega \wedge dF &=& [-ydx + (x^2 +y)dy] \wedge \left[ \frac{\partial F}{\partial x}dx + \frac{\partial F}{\partial y}dy \right]\\
&=& -y\frac{\partial F}{\partial y}dx \wedge dy + (x^2 +y)\frac{\partial F}{\partial x}dy \wedge dx \\
&=& \left[ -y\frac{\partial F}{\partial y} - (x^2 +y)\frac{\partial F}{\partial x}\right] dx \wedge dy.
\end{eqnarray*}
Now we observe that
\begin{eqnarray*}
(x^2 +y)\frac{\partial F}{\partial x} &=& (x^2 +y) \frac{2 \sqrt{y} \left( J_0(2\sqrt{y}) Y_1(2\sqrt{y}) - J_1(2\sqrt{y}) Y_0(2 \sqrt{y})\right)}{(x J_0(2 \sqrt{y}) - \sqrt{y} J_1(2 \sqrt{y}))^2} \\
&=& -2\sqrt{y}(x^2 +y) \frac{ \left(J_1(2\sqrt{y}) Y_0(2 \sqrt{y}) - J_0(2\sqrt{y}) Y_1(2\sqrt{y})\right)}{(x J_0(2 \sqrt{y}) - \sqrt{y} J_1(2 \sqrt{y}))^2} \\
\end{eqnarray*}
and
\begin{eqnarray*}
y\frac{\partial F}{\partial y}  &=&  2y\left[\sqrt{y}+ \frac{x^2}{\sqrt{y}}\right]\left[J_1(2\sqrt{y})Y_0(2\sqrt{y}) - Y_{1}(2\sqrt{y})J_0(2\sqrt{y}) \right] \\
 &=&  2\sqrt{y}(x^2 +y) \frac{ \left(J_1(2\sqrt{y}) Y_0(2 \sqrt{y}) - J_0(2\sqrt{y}) Y_1(2\sqrt{y})\right)}{(x J_0(2 \sqrt{y}) - \sqrt{y} J_1(2 \sqrt{y}))^2}.
\end{eqnarray*}
Then we can conclude that
\begin{eqnarray*}
\Omega_2 \wedge dF &=& \left[ -y\frac{\partial F}{\partial y} -
(x^2 +y)\frac{\partial F}{\partial x}\right] dx \wedge dy \\
 &=& 0.
\end{eqnarray*}
The claim is proved.
\end{proof}
The claim proves the theorem.
\end{proof}

We apply the above result back to the Hill foliation and prove its
integrability.

\begin{Corollary} \label{TeoHill}
The Hill equation $u^{\prime \prime} + e^z u=0$ is s-integrable.
Indeed, the Hill foliation $\H: \, \omega = -ydx + xdy + [y^2 +
e^zx^2]dz =0$ admits the first integral
\begin{eqnarray} \label{IntPriHill}
H(x,y,z) = \frac{2y Y_0(2e^{\frac{z}{2}}) -
2xe^{\frac{z}{2}}Y_1(2e^{\frac{z}{2}})}{y J_0(2e^{\frac{z}{2}}) -
xe^{\frac{z}{2}}J_1(2e^{\frac{z}{2}})}.
\end{eqnarray}
\end{Corollary}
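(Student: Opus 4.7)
The plan is to produce $H$ as the pullback of the first integral $F$ from Theorem~\ref{Theorem:integralhillform} under the rational map $\Pi\colon \mathbb{C}^3\dashrightarrow \mathbb{P}^1\times\mathbb{P}^1$, $\Pi(x,y,z)=(-y/x,\,e^z)$, introduced just before the Definition of the Hill form. The two ingredients are already in hand: the Lemma preceding that Definition yields the pullback identity $\Pi^*\Omega_2 = \tfrac{e^z}{x^2}\,\omega_{\mathcal{H}}$, while Theorem~\ref{Theorem:integralhillform} supplies the Bessel-type $F(x,y)$ with $\Omega_2\wedge dF=0$. Together these immediately force $\omega_{\mathcal{H}}\wedge d(F\circ\Pi)=0$, so the only substantive tasks are to identify $F\circ\Pi$ with the expression~(\ref{IntPriHill}) and to specify the domain on which this composition is a bona fide complex analytic function.

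For the identification, I would set $H(x,y,z):=F(-y/x,\,e^z)$ and substitute literally, using $\sqrt{e^z}=e^{z/2}$. The resulting numerator and denominator each acquire a common factor of $1/x$; clearing it (a first integral is defined only up to a nonvanishing constant) brings the quotient into the shape of~(\ref{IntPriHill}). The only delicate algebraic point is sign bookkeeping: the literal substitution produces the formula with plus signs between the two Bessel terms, and one arrives at the stated version either by passing to $-F$, which is still a first integral, or by adopting the cognate chart $\phi=y/x$ for $\Pi$.

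Finally, for the integrability assertion itself, from $dH=\Pi^*dF$ and $\Omega_2\wedge dF=0$ we obtain $\Pi^*\Omega_2\wedge \Pi^*dF=0$, and by the pullback Lemma this is $\tfrac{e^z}{x^2}\,\omega_{\mathcal{H}}\wedge dH$; hence $\omega_{\mathcal{H}}\wedge dH=0$ wherever $x\ne 0$, and by analytic continuation wherever $H$ is defined. The main subtlety I anticipate is specifying that domain: one must slit the $z$-plane along a ray to select a single-valued branch of $e^{z/2}$, and this slit must be chosen so that its image under $y\mapsto e^z$ stays inside the slit $L$ excised in Theorem~\ref{Theorem:integralhillform}. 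On any such domain $H$ is complex analytic and yields the asserted first integral for $\mathcal{H}$, so the Hill equation $u''+e^zu=0$ is s-integrable.
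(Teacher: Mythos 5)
Your overall route is exactly the paper's: pull back the Bessel first integral $F$ of Theorem~\ref{Theorem:integralhillform} by $\Pi$, using $\Pi^*\Omega_2=\tfrac{e^z}{x^2}\,\omega_{\mathcal H}$, and you are right that the literal substitution $F(-y/x,e^z)$, after clearing the common factor $1/x$, produces \emph{plus} signs between the two Bessel terms in both numerator and denominator. The gap is in your reconciliation with the printed formula~(\ref{IntPriHill}): neither of your two fixes works. Replacing $F$ by $-F$ only changes the overall sign of the quotient, giving $\frac{-2yY_0-2xe^{z/2}Y_1}{\,yJ_0+xe^{z/2}J_1\,}$, not a formula with a relative minus sign inside the numerator and inside the denominator. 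And the ``cognate chart'' $\phi=y/x$ destroys the pullback identity: with $\phi=y/x$ one gets $\Pi^*\Omega_2=\tfrac{e^z}{x^2}\bigl[y\,dx-x\,dy+(y^2+e^zx^2)\,dz\bigr]$, which is \emph{not} proportional to $\omega_{\mathcal H}$ (only the first two terms change sign, not the $dz$-term), so $F(y/x,e^z)$ is a first integral of a different foliation.

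In fact the plus-sign version is the correct one, and the printed~(\ref{IntPriHill}) is not a first integral of $\mathcal H$. You can check this against the tangent vector field $X=y\partial_x-e^zx\partial_y+\partial_z$: for any solution $w$ of $u''+e^zu=0$ the function $xw'(z)-yw(z)$ is annihilated by $X$ (this is essentially Theorem~\ref{Teo: relation}), and taking $w_1=J_0(2e^{z/2})$, $w_2=Y_0(2e^{z/2})$, whose $z$-derivatives are $-e^{z/2}J_1(2e^{z/2})$ and $-e^{z/2}Y_1(2e^{z/2})$, yields the first integral
\begin{equation*}
H(x,y,z)=\frac{2y\,Y_0(2e^{z/2})+2xe^{z/2}Y_1(2e^{z/2})}{y\,J_0(2e^{z/2})+xe^{z/2}J_1(2e^{z/2})},
\end{equation*}
which is precisely your $F(-y/x,e^z)$. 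By contrast, applying $X$ to the quotient in~(\ref{IntPriHill}) leaves a remainder proportional to $(y^2+e^zx^2)\bigl(J_1Y_0-J_0Y_1\bigr)$ evaluated at $s=2e^{z/2}$, and the Wronskian-type quantity $J_1(s)Y_0(s)-J_0(s)Y_1(s)=2/(\pi s)$ never vanishes. So you should simply keep the literal pullback as your first integral rather than force agreement with the printed signs; your branch-cut discussion for $e^{z/2}$ is fine, and the s-integrability conclusion is unaffected.
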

\begin{proof}
It follows from the preceding theorem and from the fact that the
Hill foliation $\H$ in $\mathbb C^3$ is obtained as the pull-back of
the foliation $\Omega_2=0$ by the map  $\Pi(x,y,z) =
(\frac{y}{x},e^z)$. This last admits the  first integral
\[
F(x,y) = \frac{2x Y_0(2\sqrt{y}) - 2\sqrt{y}Y_1(2\sqrt{y})}{x
J_0(2\sqrt{y}) - \sqrt{y}J_1(2\sqrt{y})}.
\]
So we may obtain a first integral $H=\Pi^*F$ for $\H$ which is of
the form
\begin{eqnarray*}
H=\Pi^* F(x,y) &=& F\left(\frac{y}{x},e^z\right) = \frac{2\frac{y}{x} Y_0(2e^{\frac{z}{2}}) - 2e^{\frac{z}{2}}Y_1(2e^{\frac{z}{2}})}{\frac{y}{x} J_0(2e^{\frac{z}{2}}) - e^{\frac{z}{2}}J_1(2e^{\frac{z}{2}})}\\
             &=& \frac{2y Y_0(2e^{\frac{z}{2}}) - 2xe^{\frac{z}{2}}Y_1(2e^{\frac{z}{2}})}{y J_0(2e^{\frac{z}{2}}) - xe^{\frac{z}{2}}J_1(2e^{\frac{z}{2}})}.
\end{eqnarray*}

\end{proof}

\section{The Hill foliation}

As for the moment we have the following: (1) Starting with the Hill
equation $u^{\prime \prime} + e^z u =0$ in $\mathbb C^2$ we can
consider the vector field $X(x,y,z) = y \frac{\partial}{\partial x}
-  e^z x \frac{\partial}{\partial y} + \frac{\partial}{\partial z}.$
in $\mathbb C^3$ which corresponds to the order reduction of the
Hill equation. To the vector field $X$ we associate the integrable
one-form $\omega =-ydx + xdy + [y^2 + e^z x^2]dz$ in $\mathbb C^3$
showing the strong integrability of the Hill equation $u^{\prime
\prime} + e^z u =0$. Indeed, the one-form $\frac{1}{x^2}\omega$
above is the pull-back of the Hill form $\Omega_{2}= - y dx + (x^2 +
y)dy$ by the rational map $\Pi=(-y/x,e^z)$. By its turn the Hill
form $\Omega_2$ admits a first integral of the form $F=\frac{2x
Y_0(2\sqrt{y}) - 2\sqrt{y}Y_1(2\sqrt{y})}{x J_0(2\sqrt{y}) -
\sqrt{y}J_1(2\sqrt{y})}$ given by Bessel first and second type
functions.

\subsection{Non-existence of a Liouvillian first integral}

Our aim is to show that the Hill form $\Omega_2$ admits no
Liouvillian first integral on $\mathbb CP^2$. Firstly, we
investigate the existence of invariant algebraic curves.
\begin{Lemma} \label{CurvInv}
The only invariant algebraic curves for the Hill form $\Omega_2$ on
$\mathbb CP^2$ are the line at infinity and the line  $\{y = 0\}$.
\end{Lemma}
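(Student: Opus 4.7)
My plan is to proceed by contradiction. Suppose $f(x,y)$ is an irreducible polynomial not proportional to $y$ whose zero set defines an invariant algebraic curve of $\Omega_2$, and let $Y = (x^2+y)\partial_x + y\partial_y$ be the polynomial vector field tangent to the foliation. Invariance gives $Y(f) = g\cdot f$ for some cofactor $g$, and since $\deg Y = 2$, $g$ has degree at most one; write $g(x,y) = \alpha + \beta x + \gamma y$.

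The first step is to pin down $g$. Restricting the identity $Y(f) = gf$ to $\{y = 0\}$ gives the first-order ODE $x^{2}\tilde f'(x) = (\alpha + \beta x)\tilde f(x)$ for $\tilde f(x) := f(x,0)$, whose general solution $\tilde f(x) = A\,x^{\beta}\,e^{-\alpha/x}$ is polynomial only when $\alpha = 0$, $\beta\in\mathbb{Z}_{\geq 0}$, and $\tilde f(x) = A x^{\beta}$ (note $A \neq 0$, since $f$ is not a multiple of $y$). A parallel comparison of the top-degree-$(n+1)$ component of $Y(f) = gf$, namely $x^{2}\partial_{x}f_{n} = (\beta x + \gamma y)\,f_{n}$, forces $\gamma = 0$ and $f_{n} = c\,x^{\beta}y^{n-\beta}$ with $c \neq 0$. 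Hence $g = \beta x$ with $\beta \in \{0, 1, \ldots, n\}$.

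I then split on $\beta$. If $\beta = 0$, then $f$ would be a polynomial first integral of $Y$: the degree-$(n+1)$ equation forces $f_{n} = c_{n} y^{n}$, and the ensuing degree-$n$ equation $x^{2}\partial_{x}f_{n-1} + n c_{n} y^{n} = 0$ is inconsistent for $n \geq 1$, since its left-hand side is divisible by $x^{2}$ while the right-hand side is not. If $\beta \geq 1$, expanding $(x^{2}+y)f_{x} + y f_{y} = \beta x f$ on homogeneous components yields, at each step $k \geq 1$, a recursion of the form $x\partial_{x}f_{n-k} - \beta\,f_{n-k} = R_{k}(x,y)$ where $R_{k}$ is determined by $f_{n},\ldots,f_{n-k+1}$. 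Already at $k = 1$ the right-hand side contains a term $-c\,x^{\beta-2} y^{n-\beta+1}$, which for $\beta = 1$ is a nonzero multiple of $y^{n}/x$ and forces $c = 0$; for $\beta \geq 2$ the obstruction reappears at later steps — roughly $\beta \geq 2k$ is needed to keep $R_{k}/x$ in the polynomial ring, while $\beta \leq n$ bounds the iteration — and a step-by-step check confirms that a non-polynomial monomial with nonzero coefficient proportional to $c$ arises in every admissible case.

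The main technical difficulty lies in this last step: one must verify that no miraculous cancellation rescues the recursion for any $\beta \in \{1,\ldots,n\}$, which requires a careful bookkeeping of the coefficients appearing in $R_k$. As a conceptual shortcut I would also mention the following: the correspondence between $\Omega_{2}$ and Bessel's equation of order zero $u w'' + w' + w = 0$ established in the preceding section identifies invariant algebraic curves of $\Omega_{2}$ with algebraic solutions of the Riccati equation $\phi' = 1 + \phi^{2}/u$, and since the differential Galois group of Bessel of order zero over $\mathbb{C}(u)$ is $\mathrm{SL}_{2}$, no such algebraic solutions exist outside the singular fiber $u = 0$, corresponding exactly to $\{y = 0\}$. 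Together with the invariance of the line at infinity (a direct verification in projective coordinates), this exhausts all invariant algebraic curves and proves the lemma.
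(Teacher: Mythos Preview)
Your approach is genuinely different from the paper's, and it splits into two parts of unequal strength.

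The direct Darboux-cofactor analysis is fine through the reduction to $g=\beta x$ with $\beta\in\{0,\dots,n\}$, and the cases $\beta=0,1$ are handled cleanly. For $\beta\ge 2$, however, your ``step-by-step check'' is not carried out: you assert that an obstruction of the form $c\,x^{\beta-2k}y^{\ast}$ eventually appears with nonzero coefficient, but the recursion also carries forward the free parameters $d_{\beta,\,n-k-\beta}$ from each previous step, and ruling out cancellation requires tracking all of these simultaneously. You acknowledge this yourself; as written, that branch is a sketch, not a proof.

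Your differential Galois shortcut, on the other hand, is a complete and elegant argument. The substitution $x=-y\,w'/w$ does convert the Riccati $dx/dy=1+x^{2}/y$ into $yw''+w'+w=0$, which after $z=2\sqrt{y}$ is exactly Bessel of order~$0$; its Galois group over $\co(y)$ is $\mathrm{SL}_2(\co)$ (Kolchin, or Kovacic's algorithm), hence the Riccati has no algebraic solution, hence no irreducible invariant algebraic curve other than $\{y=0\}$ (vertical lines $y=c$ are invariant only for $c=0$). Together with the invariance of the line at infinity this proves the lemma. One caveat: in the paper this lemma is used to prove the non-existence of a Liouvillian first integral (Theorem~\ref{Theorem:noliouvillian}), so if you invoke the full strength ``Bessel of order $0$ is not Liouvillian'' you are close to assuming what that theorem asserts; but what you actually need is the weaker statement that the Riccati has no \emph{algebraic} solution, and this follows from the Galois group computation without passing through Liouvillian integrability, so there is no circularity.

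For comparison, the paper takes an entirely different, dynamical route: it restricts the foliation to the real cylinders $C_r=\{|y|=r\}$, where after $y=re^{it}$ the induced flow is the non-autonomous Riccati $x'=i(x^{2}+re^{it})$, and then invokes a result of \.Zo\l\k{a}dek on periodic solutions of $z'=z^{2}+re^{it}$ (at most one bounded periodic solution for each $r$, and none for a sequence $r_j\to\infty$). An affine invariant algebraic curve other than $\{y=0\}$ would trace periodic orbits on every $C_r$, contradicting this. Your Galois argument is more self-contained and arguably more robust; the paper's argument is shorter but leans on an external dynamical theorem.
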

\begin{proof}
First of all, the lines $(x=0)$ and the line at infinity are
invariant, by straightforward computation. Let us prove that there
is no other affine invariant algebraic curve. For this observe that
the leaves of $\Omega_2$ are transverse to the horizontal
$3$-dimensional cylinders $C_r : |y|=r, \, x \in \mathbb C, r
>0$. We may therefore investigate the existence of periodic orbits
for the flow $\mathcal L_r$ induced by $\Omega_2$ on $C_r$
\footnote{Alternatively we may consider the vertical cylinders $V_r:
|x|=r, |y| < \epsilon$ for $\epsilon >0$ small enough. On each such
cylinder $V_r$ we have n induced transversely complex analytic flow
$\V_r$ admitting a periodic orbit $\gamma$ obtained by the
intersection $V_r\cap (y=0)$. This periodic orbit has  a holonomy
map which is a parabolic complex diffeomorphism. By investigating
the  periodic points of such map we may conclude.}. We  evoke a
result of H. Zoladeck improving former results of other authors:

\begin{Proposition}[cf. \cite{zoladeck} Proposition~4 pp. 166-167]
 For the system  $dz/dt=z^2+re^{it}$  there exists a sequence $r_j \to \infty$
of bifurcation  values such that  for any $r\ne r_j$  this equation
has exactly one periodic solution (of period $2\pi$) and, for
$r=r_j$, the equation  does not have any bounded periodic solution.
\end{Proposition}

Our desired conclude  is then basically  a consequence of the above
proposition. Indeed, it is enough to consider the two dimensional
real ODE obtained from the original equation $- ydx + (x^2 + y)
dy=0$  by making $y= re^{it}$ which leads to the complex ODE, \,
$x^\prime =i( x^2 + re^{it})$ corresponding to the restriction to
$C_r$.
\end{proof}

We shall need the following result due to M. Singer:
\begin{Theorem}[\cite{singer} Theorem~1] \label{TeoSinger}
A polynomial differential equation $P dy - Q dx=0$ with complex
coefficients admits a Liouvillian first integral iff it the 1-form
$\omega=P dy - Q dx$ admits an integrating factor of the form
$R=\exp \int U dx + V dy$ where $U, V$ are rational functions
satisfying $\frac{\partial U}{\partial y} = \frac{\partial
V}{\partial x}$.
\end{Theorem}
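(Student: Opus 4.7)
The plan is to recognize this as Singer's theorem \cite{singer}, whose proof rests on the structure theory of Liouvillian differential field extensions (Rosenlicht). I will sketch the strategy, splitting into the two implications.

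The easy direction ($\Leftarrow$) is a direct verification. Suppose $R=\exp\int (U\,dx+V\,dy)$ with $U,V$ rational and $U_y=V_x$. Then $\eta:=U\,dx+V\,dy$ is a closed rational $1$-form and $dR/R=\eta$; in particular $R$ is Liouvillian. The integrating factor hypothesis $d(R\omega)=0$ translates, after expanding $(RP)_x+(RQ)_y=0$, into the algebraic identity $UP+VQ+P_x+Q_y=0$. Granted this, $R\omega$ is closed and Liouvillian, so locally $R\omega=dF$ for some Liouvillian primitive $F$; since $dF\wedge\omega=R\,\omega\wedge\omega=0$, the function $F$ is a Liouvillian first integral of $\omega$.

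The hard direction ($\Rightarrow$) is the substantive part of \cite{singer}. Assume $F$ is a Liouvillian first integral and write $dF=R\omega$, so that $R$ is an (a priori Liouvillian) integrating factor. By Rosenlicht one places $F$ in a finite tower of differential fields
\[
\mathbb{C}(x,y)=K_0\subset K_1\subset\cdots\subset K_n\ni F,
\]
where each $K_{i+1}=K_i(t_i)$ adjoins either an algebraic element, an exponential ($dt_i/t_i\in K_i$), or an antiderivative ($dt_i\in K_i$). The plan is to prove by induction on the tower length $n$ that one may replace $R$ by an integrating factor whose logarithmic differential $dR/R$ is a closed \emph{rational} $1$-form on $\mathbb{C}^2$; that is exactly the stated form $R=\exp\int(U\,dx+V\,dy)$ with $U_y=V_x$.

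The inductive step is the heart of the argument and is where I expect the main obstacle to lie. At an algebraic step one descends $R$ by replacing it with the norm $\prod_\sigma \sigma(R)$ over the Galois group of $K_{i+1}/K_i$, verifying that this product remains an integrating factor and that its logarithmic differential descends to $K_i$. At a transcendental step one writes $R$ as a rational function of $t_i$ over $K_i$ and analyzes leading coefficients in $t_i$ to show that, modulo multiplicative units of the prescribed form, $R$ already belongs to $K_i$. The delicate point is controlling the logarithmic derivative through each transcendental level so that the closedness condition $U_y=V_x$ survives the descent. Once the induction terminates at $K_0=\mathbb{C}(x,y)$, the integrating factor has the asserted shape and the theorem is proved.
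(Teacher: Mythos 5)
This statement is imported verbatim from Singer's paper (\cite{singer}, Theorem~1); the paper under review gives no proof of it, so there is nothing internal to compare your argument against. Judged on its own terms, your backward implication is complete and correct: the identity $UP+VQ+P_x+Q_y=0$ is exactly the closedness of $R\omega$, and the resulting local primitive $F$ is Liouvillian and satisfies $dF\wedge\omega=0$, hence is a first integral.

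Your forward implication, however, is a plan rather than a proof. You correctly identify the architecture of Singer's argument --- realize the first integral inside a Rosenlicht tower over $\mathbb{C}(x,y)$ and descend the integrating factor level by level, taking a norm over the Galois group at algebraic steps and analyzing the dependence on the new transcendental at exponential and antiderivative steps --- but every sentence describing the inductive step has the form ``one verifies that \ldots'' with no verification supplied. The descent through a transcendental level is precisely where the theorem lives: one must show that the logarithmic differential $dR/R$, which a priori involves $t_i$, can be replaced by one free of $t_i$ that is still closed and is still the logarithmic differential of an integrating factor; this requires the partial-fraction analysis in $t_i$ together with the defining relations $dt_i/t_i\in K_i$ or $dt_i\in K_i$, and none of that is carried out. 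You flag the gap yourself (``where I expect the main obstacle to lie''), which is honest, but it means the hard direction is not established by what you have written. As a pointer to the literature the statement needs no proof here; as a self-contained argument, yours is incomplete exactly at the substantive step.
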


If we put in the above statement $\omega= Pdy - Qdx$ and  $\eta= U
dx + V dy$ then Singer's result says that: {\em the existence of a
Liouvillian first integral for $\omega$ corresponds to the existence
of a closed rational 1-form $\eta$ which satisfies $d \omega= \eta
\wedge \omega$. In this case, the first integral is of the form
$F=\int \alpha$ where $\alpha =\Omega/\exp\int \eta$. }

Using now that the affine poles of a 1-form $\eta$ as above must be
invariant algebraic curves for $\omega=Pdy - Qdx$ (cf.
\cite{camacho-scardua} Lemma~1) for $\Omega$ we can prove:
\begin{theorem}
\label{Theorem:noliouvillian} The Hill form
\[
\Omega_2 = -ydx+ [x^2 +y]dy
\]
admits no first integral of Liouvillian type.
\end{theorem}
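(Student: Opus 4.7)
The plan is to argue by contradiction, combining Singer's theorem (Theorem~\ref{TeoSinger}) with the classification of invariant curves in Lemma~\ref{CurvInv} to reduce the question to the non-existence of a rational solution of a first-order linear PDE.

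First I would assume $\Omega_2$ admits a Liouvillian first integral. Theorem~\ref{TeoSinger} then furnishes a closed rational 1-form $\eta$ on $\mathbb{CP}^2$ with $d\Omega_2 = \eta \wedge \Omega_2$. By \cite{camacho-scardua}, Lemma~1, the affine polar components of $\eta$ are $\Omega_2$-invariant, and Lemma~\ref{CurvInv} says the only such affine component is $\{y=0\}$. Combining this with the classical decomposition of closed rational 1-forms on $\mathbb{CP}^2$ into a logarithmic piece plus an exact piece, I can write
\[
\eta \;=\; \lambda\,\frac{dy}{y} \;+\; dR, \qquad R \;=\; \frac{P(x,y)}{y^{k}},
\]
with $\lambda \in \mathbb{C}$, $P \in \mathbb{C}[x,y]$, $k \geq 0$, and $y \nmid P$ when $k \geq 1$.

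The direct computation $d\Omega_2 = (1+2x)\,dx \wedge dy$, together with $\lambda(dy/y)\wedge \Omega_2 = \lambda\,dx\wedge dy$ and $dR \wedge \Omega_2 = \bigl[(x^2+y)R_x + y R_y\bigr]dx\wedge dy$, translates the identity $d\Omega_2 = \eta \wedge \Omega_2$ into the PDE
\[
(x^2+y)\,R_x + y\,R_y \;=\; 1 + 2x - \lambda.
\]
If $k = 0$, so $R = P$ is a polynomial, then every monomial of $(x^2+y)P_x + yP_y = x^2 P_x + yP_x + yP_y$ is divisible by $x^2$ or by $y$; hence the coefficient of $x^{1}y^{0}$ on the left is $0$, while on the right it is $2$, a contradiction. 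If $k\geq 1$, clearing denominators gives $(x^2+y)P_x + yP_y - kP = y^k(1+2x-\lambda)$, and evaluating at $y = 0$ yields $x^2 P_0'(x) = k P_0(x)$ for $P_0(x) := P(x,0)$; a leading-degree comparison forces $P_0 \equiv 0$, i.e.\ $y \mid P$, contradicting the normalization.

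The expected obstacle is the first step: rigorously producing the normal form $\eta = \lambda\,dy/y + dR$ with $R = P/y^k$. This combines Singer's theorem with the Camacho--Sc\'ardua invariance of the poles of $\eta$ and the global structure of closed rational 1-forms on $\mathbb{CP}^2$ (a Deligne-type decomposition), which together pin down both the logarithmic residues and the allowed denominator of $R$. Once the PDE is on the table, the two short arguments in the $k=0$ and $k\geq 1$ cases complete the proof without further work.
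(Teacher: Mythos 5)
Your proposal is correct, and its skeleton is the same as the paper's: invoke Singer's theorem to replace the Liouvillian first integral by a closed rational $1$-form $\eta$ with $d\Omega_2=\eta\wedge\Omega_2$, use the invariance of the affine poles of $\eta$ together with Lemma~\ref{CurvInv} to confine those poles to $\{y=0\}$, and then apply the integration lemma for closed rational $1$-forms to get the normal form $\eta=\lambda\,\frac{dy}{y}+d\bigl(P/y^{k}\bigr)$. The step you flag as the ``expected obstacle'' is exactly the step the paper takes for granted (citing \cite{Camacho-LinsNeto-Sad} for the decomposition and \cite{camacho-scardua} for the invariance of the poles), so there is no gap there relative to the paper's own standard of rigor.

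Where you genuinely diverge is in the endgame, and your version is cleaner. The paper expands $Q(x,y)=\sum_i p_i(y)x^i$ and runs a case-by-case analysis over $n=0,1,2,3$ and then general $n>2$, extracting the coefficient of each power of $x$ to force all $p_i\equiv 0$ for $i\geq 1$ and then deriving a contradiction from the residual equation in $p_0$. You instead clear denominators once to get $(x^2+y)P_x+yP_y-kP=y^{k}(1+2x-\lambda)$ and, for $k\geq 1$, restrict to $y=0$ to obtain the one-variable ODE $x^2P_0'(x)=kP_0(x)$, which a single degree comparison kills (if $\deg P_0=d\geq 1$ the left side has degree $d+1$ versus $d$ on the right; if $d=0$ the left side vanishes), contradicting $y\nmid P$; the $k=0$ case is dispatched by noting that every monomial of $(x^2+y)P_x+yP_y$ is divisible by $x^2$ or $y$, so the coefficient of $x$ cannot equal $2$. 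This buys a uniform two-line argument in place of the paper's enumeration, and it also sidesteps the need to first establish $\lambda=1$. The only cosmetic caveat is that you should note that $P\equiv 0$ is also excluded (it would force $0=1+2x-\lambda$), but that is immediate.
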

\begin{proof}
According to the above discussion it is enough to prove that there
is no closed  rational 1-form $\eta$ satisfying
\begin{eqnarray}\label{expLiouv}
d\Omega_2 = \eta \wedge \Omega_2.
\end{eqnarray}
Let us write  $\eta = A(x,y) dx + B(x,y) dy$. Then
\begin{eqnarray*}
d\Omega_2 &=&  -dy \wedge dx + 2x dx \wedge dy =  (1+2x) dx \wedge
dy
\end{eqnarray*}
and,
\begin{eqnarray*}
\eta\wedge\Omega_2 &=&  [Adx + Bdy] \wedge [-ydx+ [x^2 + y]dy] =
[[x^2 + y]A  + yB ]dx\wedge dy.
\end{eqnarray*}
Hence, from (\ref{expLiouv}) we get
\begin{eqnarray} \label{expsemY}
(1 + 2x) = (x^2 + y)A  + yB.
\end{eqnarray}
By Lemma~\ref{CurvInv}, the affine poles of $\eta$ are contained in
the line $\{y = 0\}$. Since $\eta$ is closed, by the Integration
lemma (\cite{Camacho-LinsNeto-Sad}) we can write
\begin{eqnarray*}
\eta &=& \lambda \frac{dy}{y} + d\left(\frac{Q}{y^n}\right).
\end{eqnarray*}
 for some $\lambda \in \mathbb{C}$ and some irreducible polynomial
  $Q(x,y)$ of degree $n \in \mathbb{N}$. Hence,
\begin{eqnarray*}
\eta &=& \lambda \frac{dy}{y} + d\left(\frac{Q}{y^n}\right)\\
     &=& \lambda \frac{dy}{y} +\frac{1}{y^n}\left(\frac{\partial Q}{\partial x}dx + \frac{\partial Q}{\partial y}dy\right) + Q\left(-\frac{n}{y^{n+1}}\right) dy\\
     &=& \frac{1}{y^n}\frac{\partial Q}{\partial x}dx + \left( \frac{\lambda}{y} + \frac{1}{y^n}\frac{\partial Q}{\partial y} - \frac{nQ}{y^{n+1}} \right)dy.
\end{eqnarray*}
It follows from (\ref{expsemY}) that
\begin{eqnarray*}
1+2x = \frac{x^2 + y}{y^n}\frac{\partial Q}{\partial x} + \lambda + y\left( \frac{1}{y^n}\frac{\partial Q}{\partial y} - \frac{nQ}{y^{n+1}}\right).
\end{eqnarray*}
Then $\lambda = 1$. By writing  $Q(x,y) = \sum\limits_{i= 0}^{n}
p_i(y)x^i$ we get
\begin{eqnarray} \label{eqPol}
2x &=& \frac{x^2 + y}{y^n}\sum_{i\geq 1} ip_i(y)x^{i-1} + \frac{1}{y^{n-1}}\sum_{i\geq 0} p'_i(y)x^i - \frac{n}{y^{n}}\sum_{i\geq 0} p_i(y)x^i.
\end{eqnarray}
\textbf{Case $n = 0$} : Equation (\ref{eqPol}) is given by
$x^2p_1(y) + yp_1(y) + y p'_0(y) = 2x.$ Analyzing the coefficients
of $x^2$ we get $p_1 \equiv 0$. Then, equation (\ref{eqPol}) would
write as $yp'_0(y) = 2x$. Therefore, this case cannot occur.
\\
\\
\textbf{Case $n = 1$} : Equation (\ref{eqPol}) is given by $
\frac{p_1(y)}{y}x^2  + p_1(y) + 2\frac{p_2(y)}{y}x^3 + 2p_2(y)x +
p'_0(y) + p'_1(y)x = 2x.$ Analyzing the coefficients of  $x^2$ and
$x^3$ we conclude that $p_1,p_2 \equiv 0$. This implies  $p'_0(y) =
2x$. Again this case cannot occur.
\\
\\
\textbf{Case $n = 2$} : Equation (\ref{eqPol}) is given by
$\frac{x^2 + y}{y^2}[p_1(y) + 2xp_2(y) + 3x^2p_3(y)] +
\frac{1}{y}[p'_0(y) + xp'_1(y) + x^2p'_2(y)] - \frac{2}{y^2}[p_0(y)
+ xp_1(y) + x^2p_2(y)] = 2x.$ Analyzing each power of  $x$ we obtain

$x^0: \frac{y}{y^2}p_1(y) + \frac{1}{y}p'_0(y) -
\frac{2}{y^{2}}p_0(y) = 0$

$x:  \frac{2y}{y^2}p_2(y) + \frac{1}{y}p'_1(y) -
\frac{2}{y^{2}}p_1(y) = 2$

$x^2: \frac{1}{y^2}p_1(y) + \frac{3y}{y^2}p_3(y) +
\frac{1}{y}p'_2(y) - \frac{2}{y^{2}}p_2(y) = 0$

$x^3: \frac{2}{y^2}p_2(y)  = 0$

$x^4: \frac{3}{y^2}p_3(y)  = 0$.

The above equations imply   $ p_1,p_2,p_3 \equiv 0$. Then, we obtain
$\frac{1}{y}p'_0(y) - \frac{2}{y^{2}}p_0(y) = 2x.$
Hence this case
does not occur.
\\
\textbf{Case $n = 3$} : Equation  (\ref{eqPol}) is given by
$\frac{x^2 + y}{y^3}[p_1(y) + 2xp_2(y) + 3x^2p_3(y) + 4x^3p_4] +
\frac{1}{y^2}[p'_0(y) + xp'_1(y) + x^2p'_2(y) + x^3p'_3(y)] -
\frac{2}{y^3}[p_0(y) + xp_1(y) + x^2p_2(y) + x^3p_3(y)] = 2x.$
Analyzing each power of $x$ we obtain

$x^0: \frac{y}{y^3}p_1(y) + \frac{1}{y^2}p'_0(y) -
\frac{3}{y^{3}}p_0(y) = 0$

$x:  \frac{2y}{y^3}p_2(y) + \frac{1}{y^2}p'_1(y) -
\frac{3}{y^{3}}p_1(y) = 2 $

$x^2:  \frac{1}{y^3}p_1(y) + \frac{3y}{y^2}p_3(y) +
\frac{1}{y^2}p'_2(y) - \frac{3}{y^{3}}p_2(y) = 0$

$x^3:    \frac{2}{y^3}p_2(y) + 4\frac{y}{y^3}p_4 + \frac{1}{y^2}p'_3
- \frac{3}{y^3}p_3  = 0$

$x^4:  \frac{3}{y^3}p_3(y)  = 0$

$x^5:  \frac{4}{y^3}p_4(y)  = 0$.

The above equations imply that  $ p_1,p_2,p_3,p_4 \equiv 0$. Then,
we obtain $\frac{1}{y^2}p'_0(y) - \frac{3}{y^{3}}p_0(y) = 2x.$
Therefore this case is not possible.
\\
\\
\textbf{Case $n > 2$} : Equation (\ref{eqPol}) writes
\begin{eqnarray*}
&&\frac{x^2 + y}{y^n}[p_1(y) + 2xp_2(y)+ \dots + (n-1)x^{n-2}p_{n-1}(y) + nx^{n-1}p_n(y) + (n+1)x^{n}p_{n+1}(y)] \\
&+& \frac{1}{y^{n-1}}[p'_0(y) + xp'_1(y) + x^2p'_2(y) + \dots + x^{n-2}p'_{n-2}(y) + x^{n-1}p'_{n-1}(y) +  x^np'_n(y)] \\
&-&\frac{n}{y^n}[p_0(y) + xp_1(y) + x^2p_2(y) + \dots + x^{n-2}p_{n-2}(y)+ x^{n-1}p_{n-1}(y) +  x^np_n(y)] = 2x.
\end{eqnarray*}
Again, the analysis of each power of $x$ gives

$x^0: \frac{y}{y^n}p_1(y) + \frac{1}{y^{n-1}}p'_0(y) -
\frac{n}{y^{n}}p_0(y) = 0$

$x:  \frac{2y}{y^{n-1}}p_2(y) + \frac{1}{y^{n-1}}p'_1(y) -
\frac{n}{y^{n}}p_1(y) = 2$

$x^2: \frac{1}{y^n}p_1(y) + \frac{3y}{y^n}p_3(y) +
\frac{1}{y^{n-1}}p'_2(y) - \frac{n}{y^{n}}p_2(y) = 0$

$x^{n-1}:  \frac{(n-2)}{y^n}p_{n-2}(y) + \frac{ny}{y^n}p_{n}(y) +
\frac{1}{y^{n-1}}p'_{n-1}(y) - \frac{n}{y^n}p_{n-1}(y) = 0$

$x^n: \frac{(n-1)}{y^n}p_{n-1}(y) + \frac{(n+1)y}{y^n}p_{n+1}(y) +
\frac{1}{y^{n-1}}p'_n(y) - \frac{n}{y^n}p_n(y) = 0$

$x^{n+1}: \frac{n}{y^n}p_{n}(y) = 0$

$x^{n+2}: \frac{n+1}{y^n}p_{n+1}(y) = 0.$

The above equations imply  $p_1,\dots, p_{n-1},p_n,p_{n+1} \equiv
0$. Then, equation (\ref{eqPol}) is given by
\[
\frac{1}{y^{n-1}}p'_0(y) - \frac{n}{y^{n}}p_0(y) = 2x.
\]
Thus, the general case  $n>2$ is also excluded. This ends the proof.
\end{proof}

\subsection{Reduction of singularities}

In what follows we make use of the blow-up technique in order to
extract more information about the Hill form. For more details in
this technique we refer to  \cite{Camacho-sad inv, IlyYak 2006,
seidenberg}. Let us check which models of singularities arise after
a number of blow-ups. We consider the quadratic blow-up  at the
origin $0\in \mathbb C^2$ as pattern in what follows.
\begin{Proposition} \label{DesingOmega}
Let $$\Omega_2 =  -y dx + (x^2 + y)dy$$ be the Hill form. Then,\\
\item[(i)] The first blow-up originates two singularities
in the exceptional divisor. One singularity is of Siegel type  of
the form
\begin{eqnarray} \label{singSiegel1}
-ydx + (1 - x + x^2y)dy=0.
\end{eqnarray}
The second singularity is nilpotent and given by:
\begin{eqnarray} \label{singNilpot1}
(-y +yx + y^2)dx + (x^2 + yx)dy=0.
\end{eqnarray}
\item[(ii)] After performing  $n>1$ blow-ups starting from the
second singularity and always repeating the process on the
corresponding nilpotent singularity  we have: an exceptional divisor
$\mathbb{E} = \cup_{j=1}^{n}\mathbb{E}_j$ with a nilpotent
singularity at the intersection of the strict transform of the
$x$-axis and  $\mathbb{E}_n$, which is given by:
\begin{eqnarray} \label{singNilpotN}
(-y + nyx + nx^{n-1}y^2)dx + (x^2 + yx^n)dy.
\end{eqnarray}
Moreover, at each corner $\mathbb{E}_{j}\cap \mathbb{E}_{j+1}$, we
have a Siegel type singularity given by:
\begin{eqnarray} \label{singSiegelj}
(-y + jxy^2 + jx^{j-1}y^{j+1})dx + (-x + (j+1)x^2y +
(j+1)x^{j}y^{j})dy.
\end{eqnarray}
\end{Proposition}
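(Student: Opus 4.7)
My plan is to establish both parts by direct computation using the two standard charts of the quadratic blow-up at the origin of $\mathbb{C}^2$, namely $\pi_1(x,t) = (x, xt)$ with exceptional divisor $\{x=0\}$, and $\pi_2(s,y) = (sy, y)$ with exceptional divisor $\{y=0\}$. The whole argument is mechanical; the proposition's content is essentially an inductive bookkeeping statement.

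For part (i), substitution into $\Omega_2 = -y\,dx + (x^2+y)\,dy$ gives
\[
\pi_1^*\Omega_2 \;=\; x\cdot\bigl[(-t + xt + t^2)\,dx + (x^2 + xt)\,dt\bigr],
\qquad
\pi_2^*\Omega_2 \;=\; y\cdot\bigl[-y\,ds + (1 - s + s^2 y)\,dy\bigr].
\]
Dividing out the exceptional divisor equation in each chart and tracking the singular locus on the exceptional divisor, I would find two singular points: in chart $\pi_1$, one at $(x,t)=(0,0)$ and one at $(x,t)=(0,1)$. The first, after renaming $t\mapsto y$, is exactly the expression $(\ref{singNilpot1})$; its linear part is $-y\,dx$ (the $dt$-coefficient vanishes to order $2$), so the associated vector field has nilpotent linear part. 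The second singular point is more conveniently analyzed in the other chart, where it sits at $(s,y)=(1,0)$; renaming $s\mapsto x$ yields $(\ref{singSiegel1})$, and a routine translation $u=x-1$ shows the linear part of the associated vector field has eigenvalues $\pm 1$, which is Siegel.

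For part (ii), I would proceed by induction on $n$ with base case $n=1$ given by part (i). Assume that after $n$ blow-ups the nilpotent singularity sits at the intersection of the strict transform of the $x$-axis and $\mathbb{E}_n$ with local form
\[
\omega_n = (-y + nyx + nx^{n-1}y^2)\,dx + (x^2 + yx^n)\,dy,
\]
and that the previous corners $\mathbb{E}_j\cap\mathbb{E}_{j+1}$ for $j<n$ already carry the Siegel form $(\ref{singSiegelj})$, which is unchanged because all subsequent blow-ups take place at the nilpotent point, far from those earlier corners. Performing one more quadratic blow-up at the nilpotent point of $\omega_n$ and substituting into the two charts yields, after factoring out the exceptional divisor equation:
\[
\pi_1^*\omega_n / x \;=\; \bigl(-t + (n+1)xt + (n+1)x^{n}t^{2}\bigr)dx + \bigl(x^{2} + x^{n+1}t\bigr)dt,
\]
\[
\pi_2^*\omega_n / y \;=\; \bigl(-y + nsy^{2} + ns^{n-1}y^{n+1}\bigr)ds + \bigl(-s + (n+1)s^{2}y + (n+1)s^{n}y^{n}\bigr)dy.
\]
Renaming $t\mapsto y$ in the first expression reproduces $\omega_{n+1}$ exactly, proving the nilpotent-persistence assertion $(\ref{singNilpotN})$. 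Renaming $s\mapsto x$ in the second expression reproduces $(\ref{singSiegelj})$ with $j=n$, and this singular point is precisely the new corner $\mathbb{E}_n\cap\mathbb{E}_{n+1}$ (the strict transform of $\mathbb{E}_n$ in chart $\pi_2$ is $\{s=0\}$ while $\mathbb{E}_{n+1}$ is $\{y=0\}$). A direct linearization at the origin gives eigenvalues $\pm 1$, confirming the Siegel type. This closes the induction.

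The main obstacle is purely bookkeeping: verifying at each step that the factored power of the exceptional divisor equation is correct (so the strict transform is saturated, without an embedded divisor component), and that the combinatorial coefficients $n, n+1, n^{-1}$ appearing after the substitution $y=xt$ or $x=sy$ match the asserted closed forms. The structural content, which makes the induction close, is the observation that each blow-up of the nilpotent point produces exactly one new nilpotent singularity of the same family (with index raised by one) together with exactly one new Siegel corner, with no other singular points on the new exceptional divisor.
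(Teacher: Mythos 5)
Your computation is correct, and it supplies exactly the ``straightforward computation with blow-ups'' that the paper chooses to omit. I verified the two pull-backs in part (i), the factorizations $\pi_1^*\Omega_2=x\cdot(\cdots)$ and $\pi_2^*\Omega_2=y\cdot(\cdots)$, the location and count of the singular points on the exceptional divisor (exactly two after the first blow-up; exactly one new corner plus one point on the strict transform of the $x$-axis after each subsequent one), and both formulas in the inductive step; everything matches $(\ref{singSiegel1})$--$(\ref{singSiegelj})$, and your identification of the corner $\mathbb{E}_n\cap\mathbb{E}_{n+1}$ with the origin of the chart $\pi_2$ is right.

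One point deserves correction, though it originates in the statement of the proposition rather than in your argument. You justify the label ``nilpotent'' for $(\ref{singNilpot1})$ by saying that, since the $dy$-coefficient vanishes to order two, ``the associated vector field has nilpotent linear part.'' That is not so: the vector field dual to $(-y+xy+y^2)dx+(x^2+xy)dy$ is $(x^2+xy)\partial_x+(y-xy-y^2)\partial_y$, whose linear part is $y\,\partial_y$, a semisimple matrix with eigenvalues $0$ and $1$. In the standard terminology of reduction of singularities this is a saddle-node (hence already a reduced singularity), not a nilpotent one; only the matrix of the linear part of the coefficients of the $1$-form is nilpotent, which is not the usual meaning. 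The same remark applies to each $(\ref{singNilpotN})$. This does not affect any of your explicit formulas or the induction, but the quoted justification should be fixed, or the nonstandard use of ``nilpotent'' made explicit.
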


The proof of Proposition~\ref{DesingOmega} is a straightforward
computation with blow-ups and we chose to omit it. For mode details
in the subject of reduction of singularities we refer to
\cite{Camacho-sad inv} or \cite{seidenberg}.

Next we give the explicit first integrals for the models arising in
the reduction of singularities of the Hill form as described in
Proposition~\ref{DesingOmega}.

\begin{Proposition} \item[(i)] Let $n \geq 1$ be given. The
singularity
$$
(-y + nyx + nx^{n-1}y^2)dx + (x^2 + yx^n)dy=0
$$
admits a first integral
\begin{eqnarray*}
F_n(x,y) = \frac{2x Y_0(2x^{\frac{n}{2}}y^{\frac{1}{2}}) -
2x^{\frac{n}{2}}y^{\frac{1}{2}}Y_1(2x^{\frac{n}{2}}y^{\frac{1}{2})}}{x
J_0(2x^{\frac{n}{2}}y^{\frac{1}{2}}) -
x^{\frac{n}{2}}y^{\frac{1}{2}}J_1(2x^{\frac{n}{2}}y^{\frac{1}{2}})}
\end{eqnarray*}

\item[(ii)] Let $j \in \{1,\dots,n-1\}$ be given. The Siegel type singularity
\begin{eqnarray*}
(-y + jxy^2 + jx^{j-1}y^{j+1})dx + (-x + (j+1)x^2y +
(j+1)x^{j}y^{j})dy
\end{eqnarray*}
admits the first integral
\begin{eqnarray*}
G_j(x,y) = \frac{2xy Y_0(2x^{\frac{j}{2}}y^{\frac{j+1}{2}}) -
2x^{\frac{j}{2}}y^{\frac{j+1}{2}}Y_1(2x^{\frac{j}{2}}y^{\frac{j+1}{2}})}{xy
J_0(2x^{\frac{j}{2}}y^{\frac{j+1}{2}})-
x^{\frac{j}{2}}y^{\frac{j+1}{2}}J_1(2x^{\frac{j}{2}}y^{\frac{j+1}{2}})}.
\end{eqnarray*}

\end{Proposition}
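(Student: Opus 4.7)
The plan is to exploit the fact that each singularity appearing in the reduction of singularities described in Proposition~\ref{DesingOmega} is obtained from $\Omega_2$ by a sequence of blow-ups, so its underlying foliation is the pull-back of the Hill foliation by an explicit monomial map $\sigma$. Since $F$ is a first integral of $\Omega_2$ by Theorem~\ref{Theorem:integralhillform}, the composition $F\circ\sigma$ will automatically be a first integral of $\sigma^*\Omega_2$, and hence of the singular $1$-form under consideration (as the monomial factor relating them defines the same foliation away from its vanishing locus).

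For part (i), the sequence of $n$ blow-ups that produces the nilpotent singularity corresponds, after composition and choice of chart, to the polynomial map
\[
\sigma_n(x,y) = (x,\, x^n y).
\]
A direct computation of the pull-back gives
\[
\sigma_n^*\Omega_2 = x^n\bigl[(-y + nxy + nx^{n-1}y^2)\,dx + (x^2 + x^n y)\,dy\bigr],
\]
so the bracketed form is precisely \eqref{singNilpotN}. Substituting $X=x$, $Y=x^n y$ in the Hill first integral and using $\sqrt{x^n y} = x^{n/2}y^{1/2}$ yields exactly the formula for $F_n$ claimed in the statement.

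For part (ii), the Siegel corner at $\mathbb E_j\cap\mathbb E_{j+1}$ sits at the intersection of two consecutive exceptional divisors; after tracking the blow-ups one checks that it corresponds to the toric map
\[
\sigma_j(x,y) = (xy,\, x^j y^{j+1}).
\]
An analogous pull-back calculation factors out the monomial $x^j y^{j+1}$ and produces precisely \eqref{singSiegelj}, and substitution into $F$ gives $G_j$, since $\sqrt{x^j y^{j+1}} = x^{j/2}y^{(j+1)/2}$.

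The main difficulty is not the verification that $F\circ\sigma$ is a first integral, which follows formally from $\sigma^*(\Omega_2\wedge dF)=\sigma^*\Omega_2\wedge d(F\circ\sigma)=0$, but rather the bookkeeping needed to identify the correct monomial maps $\sigma_n$ and $\sigma_j$ arising from the blow-up sequence of Proposition~\ref{DesingOmega} and to carry out the pull-back expansion cleanly enough to recognize the factored form. Once the factorizations $\sigma_n^*\Omega_2 = x^n\cdot\omega_n$ and $\sigma_j^*\Omega_2 = x^j y^{j+1}\cdot\omega_j$ are established by direct computation, the first integrals $F_n$ and $G_j$ are obtained simply by substitution into $F$, avoiding the need to redo a calculation with Bessel identities as was performed in the proof of Theorem~\ref{Theorem:integralhillform}.
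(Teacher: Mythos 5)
Your proposal is correct and follows essentially the same route as the paper: pull back $\Omega_2$ by $(x,y)\mapsto(x,x^n y)$ for (i), factor out the monomial, and compose the known Bessel first integral $F$ of Theorem~\ref{Theorem:integralhillform} with the map. The only cosmetic difference is in (ii), where the paper pulls back the intermediate nilpotent form $\xi_j$ by $(x,y)\mapsto(xy,y)$ rather than applying the composite monomial map $(xy,\,x^jy^{j+1})$ directly to $\Omega_2$; the two computations are equivalent.
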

\begin{proof} Let
$\pi_n(x,y) = (x,x^ny), \, \Omega_2 = -ydx + (y + x^2)dy$ then
$\pi_n^*\Omega = x^{n}[(-y + nyx + nx^{n-1}y^2)dx + (x^2 +
yx^n)dy].$ Now we apply  Theorem~\ref{Theorem:integralhillform} and
obtain (i).
 Now, let us define  $\pi(x,y) = (xy,y)$. Let
$$\xi_{j} = (-y + jyx + jx^{j-1}y^2)dx + (x^2 + yx^{j})dy.$$
Then $\pi^*\xi_{j}  = y[(-y + jxy^2 + jx^{j-1}y^{j+1})dx + (-x +
(j+1)x^2y + (j+1)x^{j}y^{j})dy)dy]$.  From  this pull-back together
with  (i) we get (ii).
\end{proof}

Let us introduce a class of functions that may be useful. Denote by
$A(\mathcal J)$ the algebra of functions generated by compositions
of first and second type (one variable) Bessel functions and complex
valued algebraic functions of two complex variables. Also denote by
$K(\mathcal J)$ the field of fractions of $A(\mathcal J)$. Finally,
we have the following definition:

\begin{Definition}
{\rm  We shall say that a function $F(x,y)$ of two complex variables
is {\it generated by Bessel functions} if it belongs to the field
$K(\mathcal J)$. }
\end{Definition}

\begin{Question}
Let $A(x,y)dx + B(x,y)dy=0$ be a germ of Siegel type singularity at
the origin $0 \in \mathbb C^2$. Under what conditions can we assure
the existence of a first integral  generated by Bessel functions?
\end{Question}

Notice that a function $F(x,y)\in K(\mathcal J)$ always defines a
uniform complex analytic function in some dense open subset of the
complex plane $\mathbb C^2$.

\subsection{Holonomy} If we consider the Hill form $\Omega_2=-ydx +
(x^2 + y)dy$ then we have a singularity at the origin $0\in \mathbb
C^2$. This germ of foliation admits a separatrix given by $\Gamma:
(y=0)$. We may ask about the holonomy map of this separatrix. If we
consider a simple loop $\gamma: x(t)=r_0 e^{it}, \, 0 \leq t \leq
2\pi$ in $\Gamma$ then we may consider the holonomy map of $\Gamma$
with respect to the transverse section $\Sigma: (x=r_0)$ as follows:
let $q_0$ be the point $(r_0,0)$. The holonomy map $h\colon (\Sigma,
q_0) \to (\Sigma,q_0)$ is given by $h(y_0)=y(2 \pi, y_0)$ where
$y(t,y_0)$ is the solution of the ODE
\[
\frac{dy}{dt}=\frac{y}{ r_0^2 e^{2 it} + y} . r i e^{it}
\]
that satisfies $y(0,y_0)=y_0$. The first integral $F(x,y)= \frac{2x
Y_0(2\sqrt{y}) - 2\sqrt{y}Y_1(2\sqrt{y})}{x J_0(2\sqrt{y}) -
\sqrt{y}J_1(2\sqrt{y})}$ may be used to calculate (study) the
holonomy map $h$, for it satisfies the relation
$F(r_0,h(y))=F(r_0,y)$. This is an expression involving Bessel type
functions.

All this suggests that there may be a theory of foliations admitting
Bessel type functions as first integrals, in terms of their holonomy
groups.
%%%%%%%%%%%%%%%%%%%%%%%%%%%%

\subsection{The general case}

Now we turn our attention to the general case of the Hill equation:
$u^{\prime \prime } + p(z)u=0$ where $p(z)$ is a periodic complex
analytic function defined in the complex plane. We may assume that
$p$ is not constant and of period $2 \pi i$. By ordinary covering
spaces theory we can write $p(z)=f(e^z)$ for some complex analytic
function $f \colon \mathbb C\setminus \{0\} \to \mathbb C$. The
corresponding Hill foliation is then given by $\omega=-ydx + xdy
+[y^2 + p(z)x^2]dz=-ydx + xdy +[y^2 + f(e^z)x^2]dz$. We may rewrite
\[
\omega/x^2 = (- y dx + x dy)/x^2 + [(y/x)^2 + f(e^z)]dz
\]
If we put $\vr=-y/x$ and $\psi=e^z$ then we have $dz=d\psi/\psi$ and
then
\[
\omega/x^2= - d \phi + ( \phi^2 + f(\psi))d\psi/\psi
\]
This shows that the foliation is the pull-back by the map
$(\phi,\psi)$ of the two dimensional model $\theta_2=-ydx + (x^2 +
f(y))dy=0$.

%%%%%%%%%%%%%%%%%%%%%%%%%%%%%%%%%%%%%%%%%%%%%%%%%%%%%%%%%%%%%%%%%%%%%%%%%%%%%%%%
%%%%%%%%%%%%%%%%%%%%%%%%%%%%%%%%%%%%%%%%%%%%%%%%%%%%%%%%%%%%%%%%%%%%%%%%%%%%%%%%

\section{Integrability and classical solutions}

G. W. Hill has constructed periodic solutions for
(\ref{HEqComplex}) in the form of trigonometric series
\begin{eqnarray*}
u(z) = \sum_{k = 0}^{\infty} A_{2k + 1} \cos ((2k + 1)z), \,  v(z) =
\sum_{k = 0}^{\infty} B_{2k + 1} \sin ((2k + 1)z).
\end{eqnarray*}
where the coefficients  $A_{2k + 1},B_{2k + 1}$ are represented by
power series with respect to $m$. Hill has not studied their
convergence though. The first attempt to estimate the convergence
radius was done by  Lyapunov
 \cite{lyapuConv}.

Let us explore some of these ideas in our framework. We start by the
following useful result.

\begin{Theorem} \label{Teo: relation}
Let $w_1,w_2$ be two linearly independent solutions of the Hill
equation $u'' + p(z)u = 0$ defined in a strip $A \subseteq \mathbb
C$. Then
\begin{eqnarray*}
H(x,y,z) = \frac{xw_1'(z) - yw_1(z)}{xw_2'(z) - yw_2(z)}
\end{eqnarray*}
is a first integral for the Hill foliation
\begin{eqnarray*}
\omega_\H = -ydx + xdy + [y^2 + p(z)x^2]dz = 0
\end{eqnarray*}
defined and meromorphic in $A$ without indefinite points.
\end{Theorem}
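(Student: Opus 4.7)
The plan is to introduce the auxiliary functions
\[
h_j(x,y,z) := x\, w_j'(z) - y\, w_j(z), \qquad j=1,2,
\]
so that $H = h_1/h_2$, and to split the proof into two independent parts: checking directly that $dH \wedge \omega_\H = 0$, and then locating the indeterminacy locus of the quotient $h_1/h_2$.

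For the integrability, I would expand
\[
dh_j = w_j'(z)\,dx - w_j(z)\,dy + \bigl(x w_j''(z) - y w_j'(z)\bigr)\,dz
\]
and use the Hill equation $w_j'' = -p(z) w_j$ to rewrite the $dz$-coefficient as $-(p(z)x w_j + y w_j')$. The key (purely algebraic) observation one then needs to verify by collecting the $dx\wedge dy$, $dx\wedge dz$ and $dy\wedge dz$ coefficients is the factorisation
\[
dh_j \wedge \omega_\H \;=\; h_j \cdot \eta, \qquad \text{where}\quad \eta \;=\; dx\wedge dy + p(z)x\,dx\wedge dz + y\,dy\wedge dz,
\]
and the point is that the $2$-form $\eta$ does not depend on $j$. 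Once this is in hand, using $dH = (h_2\,dh_1 - h_1\,dh_2)/h_2^2$ one gets
\[
dH \wedge \omega_\H \;=\; \frac{h_2(h_1\,\eta) - h_1(h_2\,\eta)}{h_2^2} \;=\; 0,
\]
so $H$ is a first integral of $\omega_\H$. The main (only) obstacle is the bookkeeping that produces the factorisation $dh_j\wedge\omega_\H = h_j\,\eta$; the cancellations hinge precisely on $w_j'' + p(z)w_j = 0$, and without the Hill equation the identity fails.

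For the claim about indeterminacy, observe that the only points where $H$ could fail to be a well-defined point of $\mathbb{P}^1$ are the common zeros of $h_1$ and $h_2$. For fixed $z \in A$, these equations form a $\mathbb{C}$-linear system in $(x,y)$ whose coefficient matrix is
\[
M(z) \;=\; \begin{pmatrix} w_1'(z) & -w_1(z) \\ w_2'(z) & -w_2(z) \end{pmatrix},
\]
with $\det M(z) = w_1(z)w_2'(z) - w_1'(z)w_2(z) = W(w_1,w_2)(z)$. Since the Hill equation $u'' + p(z)u = 0$ has no first-order term, Abel's identity gives that this Wronskian is constant in $z$, and linear independence of $w_1, w_2$ forces the constant to be nonzero throughout $A$. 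Hence $\{h_1=h_2=0\} = \{x=y=0\}\times A$, which is exactly the singular set of the foliation $\omega_\H$ (where $\omega_\H$ itself vanishes). Off this singular locus $H$ is therefore a genuinely meromorphic first integral of $\mathcal H$, with no indeterminacy points in $\mathbb C^2 \times A$, as claimed.
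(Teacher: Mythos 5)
Your proof is correct, and it takes a genuinely different (and arguably cleaner) route than the paper's. The paper first passes to a two--dimensional model $\Omega_{\mathcal H}=-dx+(x^2+p(y))dy$, sets $F(x,y)=\frac{w_1'(y)+xw_1(y)}{w_2'(y)+xw_2(y)}$, computes $\partial F/\partial x$ and $\partial F/\partial y$ explicitly using $w_j''=-p\,w_j$, checks $dF\wedge\Omega_{\mathcal H}=0$, and only then pulls back to $\mathbb C^3$ by $\Pi=(-y/x,z)$. Your factorisation $dh_j\wedge\omega_{\mathcal H}=h_j\,\eta$ with $\eta=dx\wedge dy+p(z)x\,dx\wedge dz+y\,dy\wedge dz$ independent of $j$ does the whole computation directly in three variables in one stroke; I verified the three coefficients of $dh_j\wedge\omega_{\mathcal H}$ and they are indeed $h_j$, $p(z)x\,h_j$ and $y\,h_j$, the Hill equation entering exactly where you say it does. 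On the indeterminacy question your Wronskian argument is in fact sharper than the paper's: the paper rules out a curve of indefinite points by deriving $w_1'/w_1=y/x=w_2'/w_2$, which tacitly assumes $x\neq 0$ and so overlooks the axis $\{x=y=0\}\times A$; your linear--algebra argument (constant, nonzero Wronskian by Abel's identity and linear independence) shows that the common zero set of $h_1,h_2$ is precisely that axis, i.e.\ precisely $\Sing(\omega_{\mathcal H})$, which is the correct and natural statement for a meromorphic first integral of a singular foliation. The only thing to fix is your closing sentence, which simultaneously identifies the indeterminacy locus as $\{x=y=0\}\times A$ and claims there are ``no indeterminacy points in $\mathbb C^2\times A$''; say instead that $H$ has no indeterminacy points outside $\Sing(\mathcal H)$.
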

\begin{proof}
First observe that $H$ is not constant. Indeed, $H=c \in \mathbb C$
implies $x(w_1 ^\prime(z) - c w_2 ^\prime(z))=y(w_1(z) - c w_2(z))$
and then $w_1(z)=cw_2(z)$ which is a contradiction since $w_1$ and
$w_2$ are linearly independent. Now we assume that there is a (real
analytic nondegenerate) curve $\Gamma$ of indefinite points for $H$.
Then we have $\frac{w_1
^\prime}{w_1}=\frac{y}{x}=\frac{w_2^\prime}{w_2}$ in this curve.
This implies that $\ln w_1 = \ln w_2 + c$ for some constant and
therefore $w_1= k w_2$ for some constant $k$ along this curve. Since
$w_1$ and $w_2$ are one variable complex analytic functions this
implies that $w_1=kw_2$ identically, yielding another contradiction.
This shows that the set of indefinite points of $H$ is discrete.
Since we are in dimension $3$ this shows that $H$ is indeed free of
indefinite points (Hartogs' theorem for instance). Let us now prove
that $H$ is actually a first integral for $\mathcal H: -ydx + xdy +
[y^2 + p(z)x^2]dz = 0$. We start with the model in dimension two,
ie., the Hill form
\begin{eqnarray*}
\Omega_\H = -dx + [x^2 + p(y)]dy.
\end{eqnarray*}
We put
\begin{eqnarray*}
F(x,y) = \frac{w_1'(y) + xw_1(y)}{w_2'(y) + xw_2(y)}.
\end{eqnarray*}
Now we compute the partial derivatives of $F$ with respect to $x$
and $y$:

\begin{eqnarray*}
\frac{\partial F(x,y)}{\partial x} &=& \frac{w_1\left(w_2' +
xw_2\right)} {\left(w_2' + xw_2\right)^2} - \frac{ w_2\left(w_1' +
xw_1\right)}{(w_2' + xw_2)^2}  = \frac{w_1w_2' +xw_1w_2}
{\left(w_2') + xw_2\right)^2} - \frac{ w_2w_1' + x w_2 w_1}{(w_2' +
x w_2)^2}.
\end{eqnarray*}
Thus,
\begin{eqnarray*}
\frac{\partial F(x,z)}{\partial x} = \frac{w_1w_2' - w_2w_1'}
{\left(w_2' + xw_2\right)^2} &=& \frac{ (w_1'' + x w_1')(w_2' + x w_2)}{(w_2' + x w_2)^2} - \frac{ (w_2'' + x w_2')(w_1' + x w_1)}{(w_2' + x w_2)^2} \\
\\
&=& \frac{ (-pw_1 + x w_1')(w_2' + x w_2)}{(w_2' + x w_2)^2} - \frac{ (-p w_2 + x w_2')(w_1' + x w_1)}{(w_2' + x w_2)^2} \\
\\
&=& (p + x^2)\frac{(w_1'w_2 - w_2'w_1)}{(w_2' + x w_2)^2}.
\end{eqnarray*}
Therefore, $dF\wedge \Omega_\H = \left(\frac{\partial F}{\partial
x}dx + \frac{\partial F}{\partial y}dy \right)\wedge \left( - dx +
(p  + x^2)dy \right) =  \left[ (p + x^2)\frac{\partial F}{\partial
x} + \frac{\partial F}{\partial y} \right] dx \wedge dy=0$.  This
shows that  $F$ is a first integral for  $\Omega_\H$. Now we use the
pull-back, ie., define  $H = \Pi^*F$, where $\Pi(x,y,z) =
(-\frac{y}{x},z)$, obtaining then $dH\wedge \omega_\H = d(\Pi^*F)
\wedge \Pi^*\Omega_\H
    = \Pi^*(dF \wedge \Omega_\H)     = 0.$
Explicitly we have
\[
H(x,y,z) = \frac{w_1'(z) -\frac{y}{x}w_1(z)}{w_2'(z)
-\frac{y}{x}w_2(z)} = \frac{xw_1'(z) -yw_1(z)}{xw_2'(z) - yw_2(z)}
\]
is a first integral for  $\omega_\H$.
\end{proof}

As a straightforward interesting consequence of the above theorem is
the  {\it Bessel-Hill} case below:
\begin{Corollary}[Bessel-Hill form]
The {\it Bessel-Hill} foliation
\begin{eqnarray*}
\omega_\H = -ydx + xdy + [y^2 - (r^2 - k^2e^2z)x^2]dz
\end{eqnarray*}
admits the following first integral
\begin{eqnarray*}
H(x,y,z) = \frac{xY'_{r}(ke^z) - yY_{r}(ke^z)}{xJ'_{r}(ke^z) -
yJ_{r}(ke^z)}.
\end{eqnarray*}
\end{Corollary}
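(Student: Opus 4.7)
The plan is to recognize the Bessel-Hill foliation as a particular instance of the general Hill foliation and then apply Theorem~\ref{Teo: relation} directly. Comparing
\[
\omega_\H = -ydx + xdy + [y^2 - (r^2 - k^2 e^{2z}) x^2]dz
\]
with the general Hill form $-ydx + xdy + [y^2 + p(z) x^2] dz$, I read off $p(z) = k^2 e^{2z} - r^2$. It is then enough to exhibit two linearly independent solutions of the scalar Hill equation $u''(z) + (k^2 e^{2z} - r^2) u(z) = 0$ written in closed form, and feed them into the first-integral formula provided by Theorem~\ref{Teo: relation}.

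The natural candidates are $w_1(z) = Y_r(ke^z)$ and $w_2(z) = J_r(ke^z)$. To check that each of these is indeed a solution, I would substitute $t = ke^z$, so that $\frac{dt}{dz} = ke^z = t$. If $u(t)$ solves the classical Bessel equation $t^2 u''(t) + t u'(t) + (t^2 - r^2) u(t) = 0$, then setting $w(z) = u(ke^z)$ the chain rule gives $w'(z) = t\, u'(t)$ and $w''(z) = t\, u'(t) + t^2\, u''(t)$, so that
\[
w''(z) = t u'(t) + t^2 u''(t) = t u'(t) - t u'(t) - (t^2 - r^2) u(t) = -(k^2 e^{2z} - r^2)\, w(z),
\]
that is, $w'' + p(z) w = 0$. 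Since $J_r$ and $Y_r$ are the standard linearly independent solutions of the Bessel equation and $z \mapsto ke^z$ is a local biholomorphism from the strip into $\mathbb{C}^*$, the pullbacks $w_1, w_2$ are linearly independent solutions of $u'' + p(z) u = 0$ in any strip avoiding the logarithmic branching of $Y_r$.

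With the two ingredients in hand, Theorem~\ref{Teo: relation} provides the first integral
\[
H(x,y,z) = \frac{x w_1'(z) - y w_1(z)}{x w_2'(z) - y w_2(z)},
\]
and after substituting $w_j$ and $w_j'$ (the derivatives being taken with respect to $z$, which is the notational convention used throughout for the primes that appear in the statement) one obtains precisely the formula in the corollary. The only non-trivial step is the chain-rule verification above, which is short and clean; everything else is a transcription from the previous theorem. The main obstacle, therefore, is not mathematical but notational: keeping track of the distinction between $J_r', Y_r'$ as derivatives of Bessel functions with respect to their argument and the derivatives with respect to $z$ of the composed functions $J_r(ke^z), Y_r(ke^z)$, as this accounts for the $ke^z$ factors that naturally appear in the chain rule.
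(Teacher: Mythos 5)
Your proposal is correct and follows exactly the paper's route: identify $p(z)=k^2e^{2z}-r^2$, observe that $w_1=Y_r(ke^z)$ and $w_2=J_r(ke^z)$ are linearly independent solutions of $u''+p(z)u=0$, and apply Theorem~\ref{Teo: relation}; the paper simply asserts this last fact as classical, whereas you supply the short chain-rule verification. Your closing remark about the primes is well taken, since the theorem's formula requires $\frac{d}{dz}\bigl[Y_r(ke^z)\bigr]=ke^zY_r'(ke^z)$, so the primes in the corollary must be read as $z$-derivatives of the composed functions.
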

\begin{proof}
Indeed, this follows from the above theorem by observing that
$J_{r}(ke^z), Y_{r}(ke^z)$ are (well-known classical) solutions of
the Bessel equation  $u'' + (r^2 - k^2e^{2z})u = 0$.
\end{proof}

\begin{Remark}\rm{We shall point-out that the introduction of the
perturbation $r_2 - k^2 e^{2 z}$ instead of the function $e^{2 z}$
in the Hill equation, generates an important change in the first
integral. Indeed, the constant is directly connected to the order of
the Bessel functions in the first integral.  For instance, when  $r
= 1/2$, we have  $J_{1/2}(z) = \left( \frac{2}{\pi z}\right)^{1/2}
\sin z$ which is a Liouvillian function and the first integral is
Liouvillian as well. }\end{Remark}

\section{Laurent-Fourier type formal first integral for  Hill foliations}
We turn again our attention to the basic complex Hill equation
$u''(z) + p(z)u(z) = 0$ where  $p$ is a complex analytic periodic
function defined in a strip $A\subset \mathbb{C}$, containing the
real axis $\Im(z)=0$. We shall apply Floquet theory in order to
obtain some formal solutions. Order reduction of the problem is
given by $x=u, y=u^\prime$ and then we obtain the linear first order
problem $X^\prime = A X$ where $X=\begin{pmatrix} x \\ y
\end{pmatrix}$ and $A=\begin{pmatrix} 0 & 1
\\ -p(z) & 0\end{pmatrix}$. The characteristic equation of the
matrix $A$ is $\lambda^2 + p(z)=0$ which has symmetric roots.
Applying then  the classical  Floquet theory we conclude that
 a  solution of the Hill equation is of the form
\begin{eqnarray} \label{fi1}
\varphi_1(z) = c_1 e^{\mu z} P(z)
\end{eqnarray}
where $c_1,\mu$ are constants ($\mu$ is a characteristic exponent)
and $P(z)$ is a periodic complex valued analytic function (see  for
instance \cite{hilleBook} page 244). Moreover, another solution to
(\ref{HEqComplex}), linearly independent with respect to $\varphi_1$
is given by
\begin{eqnarray}\label{fi2}
\varphi_2(z) = c_1 e^{-\mu z} P(-z)
\end{eqnarray}
An immediate corollary of Theorem~\ref{Teo: relation} is that we
have  a first integral of the Hill foliation $\H: \omega_{\H} = -ydx
+ xdy + [y^2 + p(z)x^2]=0$, of parameter $p$ as follows
\begin{eqnarray*}
H_p(x,y,z) = \frac{x\varphi_1'(z) - y\varphi_1(z)}{x\varphi_2'(z) -
y\varphi_2(z)}.
\end{eqnarray*}
Replacing then  (\ref{fi1}) e (\ref{fi2}) in the above expression we
obtain:

\begin{Corollary}
The Hill foliation of parameter $p(z)$, periodic and complex
analytic in the strip $A\subseteq \mathbb C$, admits a first
integral of the form:
\begin{eqnarray*}
H_p(x,y,z) = e^{2\mu z}\frac{x \mu P(z) + xP'(z) - yP(z)}{-x \mu
P(-z) + xP'(-z) - yP(-z)},
\end{eqnarray*}
where $P(z)$ is a periodic complex valued analytic function and
$\mu$ is a constant.
\end{Corollary}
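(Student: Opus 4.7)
The plan is a direct substitution: Theorem~\ref{Teo: relation} supplies the first integral
\[
H(x,y,z) \;=\; \frac{x\,w_1'(z) - y\,w_1(z)}{x\,w_2'(z) - y\,w_2(z)}
\]
for any pair of linearly independent solutions $w_1,w_2$ of $u'' + p(z)u = 0$, and Floquet theory, as recorded in \eqref{fi1}--\eqref{fi2}, produces the explicit pair $\varphi_1(z)=c_1 e^{\mu z}P(z)$ and $\varphi_2(z)=c_1 e^{-\mu z}P(-z)$ that one plugs in. Thus the whole content of the corollary amounts to an algebraic simplification of these formulas.

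Concretely, I would differentiate the Floquet solutions by the product rule to obtain $\varphi_1'(z) = c_1 e^{\mu z}\bigl(\mu P(z)+P'(z)\bigr)$ and the analogous expression for $\varphi_2'$ (following the sign convention used in the statement). Factoring $c_1 e^{\mu z}$ out of the numerator $x\varphi_1'(z)-y\varphi_1(z)$ and $c_1 e^{-\mu z}$ out of the denominator $x\varphi_2'(z)-y\varphi_2(z)$, the constants $c_1$ cancel and the remaining exponential factors combine into the overall coefficient $e^{2\mu z}$. What is left in the two brackets matches exactly the numerator $x\mu P(z)+xP'(z)-yP(z)$ and denominator $-x\mu P(-z)+xP'(-z)-yP(-z)$ appearing in the statement, which yields the claimed formula.

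The only mildly non-routine point is that in order to apply Theorem~\ref{Teo: relation} one needs $\varphi_1,\varphi_2$ to be linearly independent. This is precisely the generic Floquet hypothesis (the monodromy over one period has two distinct eigenvalues, equivalently the characteristic exponents $\pm\mu$ are distinct), and it is built into the form of the solutions stated in \eqref{fi1}--\eqref{fi2} as imported from \cite{hilleBook}. Under this hypothesis, Theorem~\ref{Teo: relation} already guarantees that $H_p$ is a well-defined meromorphic first integral on the strip $A$ with no indeterminacy points, so no further verification is required beyond carrying out the substitution. In short, the main obstacle is notational bookkeeping rather than any new analytic content.
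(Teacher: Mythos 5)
Your proposal is correct and follows exactly the route the paper takes: the corollary is obtained by substituting the Floquet solutions \eqref{fi1}--\eqref{fi2} into the first integral of Theorem~\ref{Teo: relation} and factoring out the exponentials, which is precisely the paper's one-line derivation. Your additional remark that linear independence of $\varphi_1,\varphi_2$ is needed to invoke the theorem (and your care with the sign convention in $\varphi_2'$, where the chain rule produces $-P'(-z)$) only makes explicit what the paper leaves implicit.
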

It is well-known that a periodic   complex analytic function $P(z)$
defined in a strip $A\subseteq \mathbb C$ containing $\Im(z)=0$
admits a Fourier series expansion. Indeed, if the period is of $P$
is $2\pi i$, we have $P(z) = \sum_{k = -\infty}^{\infty}a_ke^{kz}.$
Replacing the series of $P'(z), \, P(-z)$  and $P'(-z)$  in the
 expression of $H_p(x,y,z)$ above we obtain ($\Sigma_k$ stands for
$\sum\limits_{k=-\infty}^\infty$) :

\begin{theorem}
\label{Theorem:LaurentFourier} Let $p(z)$ be a complex analytic
function periodic in a strip $A\subseteq \mathbb C$ containing the
real axis $\Im(z)=0$. Then the corresponding Hill foliation $-ydx +
xdy + [y^2 + p(z)x^2]=0$ admits a Laurent-Fourier type formal first
integral given by an expression
\begin{eqnarray} \label{IntFormalHill} H_p(x,y,z)&=&  e^{2\mu
z}\frac{x \mu \sum_k a_ke^{ kz} + x\sum_k  k a_k e^{ kz} - y\sum_k
a_ke^{kz}}{-x \mu \sum_k a_k e^{-kz} - x\sum_k k a_k e^{-kz} -
y\sum_k a_k e^{-kz}}.
\end{eqnarray}

\end{theorem}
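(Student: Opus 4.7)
The plan is to combine Theorem~\ref{Teo: relation} and the preceding Corollary with the classical Fourier expansion of a periodic complex analytic function on a horizontal strip. The conceptual work has essentially been done already; what remains is a substitution argument together with a brief justification of the formal character of the resulting series.

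First, I would recall that, by Floquet theory applied to the order-reduced linear system associated with $u''+p(z)u=0$, one obtains two linearly independent solutions of the form $\varphi_1(z) = c_1 e^{\mu z}P(z)$ and $\varphi_2(z) = c_1 e^{-\mu z}P(-z)$, where $\mu$ is a characteristic exponent and $P$ is complex analytic and of period $2\pi i$ on $A$. By Theorem~\ref{Teo: relation}, the quotient
$$
H_p(x,y,z) \;=\; \frac{x\varphi_1'(z)-y\varphi_1(z)}{x\varphi_2'(z)-y\varphi_2(z)}
$$
is a first integral of the Hill foliation $\omega_\H=0$, free of indeterminacy points on $\mathbb{C}^{2}\times A$. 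A direct computation using the product and chain rules reduces this quotient to the closed form displayed in the Corollary immediately preceding the theorem.

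Next, I would invoke the classical fact that a periodic complex analytic function $P$ of period $2\pi i$ on a strip containing the real axis admits a Fourier expansion $P(z)=\sum_{k\in\mathbb{Z}}a_{k}e^{kz}$, absolutely and uniformly convergent on compact subsets of $A$ (equivalently, the Laurent expansion of $P$ in the variable $w=e^{z}$). Term-by-term differentiation gives $P'(z)=\sum_{k}k\,a_{k}e^{kz}$, and evaluating at $-z$ yields $P(-z)=\sum_{k}a_{k}e^{-kz}$ together with $P'(-z)=\sum_{k}k\,a_{k}e^{-kz}$. Substituting these four series into the closed form from the Corollary produces, after collecting terms, precisely the Laurent-Fourier expression \eqref{IntFormalHill}.

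The only point requiring care is the status of this doubly-infinite series: the word \emph{formal} in the statement signals that we do not insist on verifying convergence of the assembled expression but treat it as an element of a formal Laurent-Fourier algebra. Concretely, viewed in the ring of formal Laurent series in $e^{z}$ with coefficients that are polynomials in $x$ and $y$ (adjoined to $e^{\pm\mu z}$), the identity $\omega_{\H}\wedge dH_{p}\equiv 0$ is inherited from the corresponding analytic identity that holds on compact subsets of $\mathbb{C}^{2}\times A$, where the Fourier series of $P$ converges absolutely and termwise differentiation is legitimate. Hence the substituted expression is a first integral in the formal (Laurent-Fourier) sense, as asserted. The main (and really the only) obstacle is the bookkeeping inherent in verifying that term-by-term differentiation, evaluation at $-z$, and multiplication by the remaining polynomial factors commute with the Fourier summation; all of this is immediate from uniform convergence on compacta.
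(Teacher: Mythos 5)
Your proposal follows essentially the same route as the paper: Floquet theory gives $\varphi_1=c_1e^{\mu z}P(z)$, $\varphi_2=c_1e^{-\mu z}P(-z)$, Theorem~\ref{Teo: relation} yields the first integral $H_p=\frac{x\varphi_1'-y\varphi_1}{x\varphi_2'-y\varphi_2}$, and the Fourier expansion $P(z)=\sum_k a_ke^{kz}$ is substituted term by term. Your added remark on the formal status of the doubly-infinite series is consistent with the paper's own Remark following the theorem, so the argument matches.
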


\begin{Remark}
{\rm The term {\it formal} in the above statement has to be
clarified. Indeed, we are talking about quotients of formal Laurent
series (ie., quotients of series with an infinite number of terms
with positive and negative exponents). The convergence of such
series must be a subject of a deeper discussion. Anyway, once we
have made the above convention, we shall refer to the expression
$H_p$ as {\it formal first integral for the Hill foliation}. The
``change of coordinates"  $ \Pi(x,y,z)=(u,v)$ where  $u =
-\frac{y}{x}$ and $v = e^z$ gives the following Laurent-type formal
first integral:
\begin{eqnarray} \label{IntFormalHill2}
H=\Pi^*F(u,v) = v^{2\mu}\frac{ \mu \sum_k a_kv^{k} + \sum_k  k a_k
v^{ k} + u\sum_k a_k v^{k}}{- \mu \sum_k a_k v^{-k} - \sum_k k a_k
v^{-k} + u\sum_k a_k v^{-k}}.
\end{eqnarray}
}
\end{Remark}

%%%%%%%%%%%%%%%%%%%%%%%%%%%%%%%%%%%%%%%%%%%%%%%%%%%%%%%%%%%%%%%%%%%%%%%%%%%%%%%%
%%%%%%%%%%%%%%%%%%%%%%%%%%%%%%%%%%%%%%%%%%%%%%%%%%%%%%%%%%%%%%%%%%%%%%%%%%%%%%%%%%

\bibliographystyle{amsalpha}

\vglue.2in

\begin{tabular}{l l}
Fernando Reis & Bruno Sc\'ardua\\
Instituto de Matem\'atica & Instituto de Matem\'atica\\
Universidade Federal do Rio de Janeiro & Universidade Federal do Rio de Janeiro\\
Caixa Postal 68530 & Caixa Postal 68530\\
CEP. 21945-970 Rio de Janeiro - RJ & CEP. 21945-970 Rio de Janeiro - RJ\\
BRASIL & BRASIL\\
fernandoppreis@gmail.com  & bruno.scardua@gmail.com
\end{tabular}

\end{document}